\newtheorem{thm}{Theorem}[section]
\newtheorem{prop}[thm]{Proposition}
\newtheorem{lem}[thm]{Lemma}
\newtheorem{cor}[thm]{Corollary}
\newtheorem{dfn}[thm]{Definition}
\newtheorem{rmk}[thm]{Remark}
\newtheorem{qs}[thm]{Question}
\DeclareMathOperator{\GL}{GL}
\DeclareMathOperator{\aut}{Aut}
\DeclareMathOperator{\End}{End}
\DeclareMathOperator{\Out}{Out}
\newcommand{\inv}{^{-1}}
\title[Gap between a free group automorphism and its inverse]{Bounding the gap between a free group (outer) automorphism and its inverse}
\author{M.~Ladra}
\address{Department of Algebra,  University of Santiago de Compostela,\\15782
Santiago de Compostela, Spain}
\email{manuel.ladra@usc.es}
 \author{P.V.~Silva}
\address{Centro de Matem\'{a}tica, Faculdade de Ci\^{e}ncias,
Universidade do Porto, \\
R. Campo Alegre 687, 4169-007 Porto, Portugal}
\email{pvsilva@fc.up.pt}
\author{E.~Ventura}
\address{Dept. Mat. Apl.
III, Universitat Polit\`ecnica de Catalunya,\\Manresa, Barcelona, Catalunya}
\email{enric.ventura@upc.edu}
\begin{document}

\begin{abstract}
For any finitely generated group $G$, two complexity functions $\alpha_G$ and $\beta_G$
are defined to measure the maximal possible gap between the norm of an automorphism
(respectively outer automorphism) of $G$ and the norm of its inverse. Restricting
attention to free groups, $F_r$, the exact asymptotic behaviour of $\alpha_2$ and
$\beta_2$ is computed. For rank $r\geqslant 3$, polynomial lower bounds are provided for
$\alpha_r$ and $\beta_r$, and the existence of a polynomial upper bound is proved for
$\beta_r$.
\end{abstract}

\keywords{automorphism, inverse automorphism, norm of an automorphism, free group.}
\subjclass[2010]{20E05, 20E36, 20F65.}

\maketitle

\section{Introduction}

The goal of this paper is to study automorphisms of groups, specifically to introduce a new
technique to measure how easy or difficult is it to invert them. With this in mind, we
associate two new functions, $\alpha_G(n)$ and $\beta_G(n)$, to the group $G$ and propose
to study its asymptotic behaviour.

In the present introduction we define these functions in general, and show they are
independent from the set of generators, up to multiplicative constants. Then, for the rest
of the paper, we restrict our attention to finitely generated free groups and give several
results concerning the asymptotic growth of their corresponding functions. A similar
project can be carried out in any other families of groups $G$; we hope the study of these
new functions motivates new interesting results in the near future.

Let $G$ be a finitely generated group, and let us fix a finite set of generators $A=\{
a_1,\, \ldots ,a_r\}$.

This naturally gives a metric on $G$: every element $g\in G$ can be written as a product of
the $a_i$'s and their inverses, and one defines $|g|_A$ to be the length of the shortest
such expression i.e., $|g|_A\leqslant n$ if and only if $g=a_{i_1}^{\epsilon_1}\cdots
a_{i_m}^{\epsilon_m}$ for some $m\leqslant n$, some indices $i_1,\ldots ,i_m\in \{ 1,\ldots
,r\}$ and some signs $\epsilon_i =\pm 1$. Of course, $|1|_A =0$, $|g^n|_A \leqslant
|n||g|_A$, and $|gg'|_A \leqslant |g|_A +|g'|_A$ hold for all $g,g' \in G$ and all integer
$n$.

The same can be done with an infinite set of generators. However, $|A|<\infty$ gives us
finiteness of balls, $|\{ g\in G \mid |g|_A \leqslant n\}|<\infty$, which is a crucial
property in many respects; for example, in our definitions below.

Let us consider the group of automorphisms of $G$, $\aut G$. We let automorphisms act on
the right, so we write $\varphi \colon G\to G$, $g\mapsto g\varphi$. For every $g\in G$, we
denote by $\lambda_g$ the right conjugation by $g$, namely $x\lambda_g = g^{-1} xg$. Since
$\lambda_g\varphi = \varphi\lambda_{g\varphi}$, it follows easily that $\Lambda = \{
\lambda_g \mid g\in G\}$ is a normal subgroup of $\aut G$. Each of the cosets $[\varphi] =
\varphi\Lambda$ is said to be an \emph{outer automorphism} of $G$. We write $\Out G=(\aut
G)/\Lambda$.

Of course, every automorphism $\varphi \in \aut G$ is determined by the images of the
generators $a_1,\, \ldots ,\, a_r$. And the sum of its lengths is a good measure of the
complexity of $\varphi$ (understood as a rule moving elements of $G$ around). Let us define
then the \emph{norm} of $\varphi$ as
 $$
\|\varphi \|_A = |a_1\varphi |_A +\cdots +|a_r\varphi |_A.
 $$

Note that there is no $\varphi \in \aut G$ with $\| \varphi\|_A\leqslant r-1$, because
$a_i\varphi \neq 1$ for all $i$; the shortest automorphism (among possibly others) is the
identity, $\| Id_G \|_A=r$. Note also that, for increasing values of $n\geqslant r$, there
is a non-decreasing number of automorphisms $\varphi \in \aut G$ with $\| \varphi \|_A
\leqslant n$, but only finitely many for every fixed $n$. Observe also that $\| g\varphi
\|_A \leqslant |g|_A \cdot \| \varphi\|_A$ for all $g\in G$ and all $\varphi \in \aut G$.

This measure induces a similar measure on $\Out G$, defined as follows. Given $\Phi \in
\Out G$, we define the \emph{norm} of $\Phi$ as
 $$
\|\Phi \|_A = \min \, \{ \|\varphi \|_A \mid \varphi \in \Phi \}.
 $$
Once again, for every fixed $n$, there exists a finite number of outer automorphisms $\Phi
\in \Out G$ with $\| \Phi \|_A \leqslant n$.

A natural question is to ask about the relation between $\|\varphi \|_A$ and
$\|\varphi^{-1}\|_A$ (resp., between $\|\Phi \|_A$ and $\|\Phi^{-1}\|_A$). If one
happens to be significantly bigger than the other, then it intuitively means that
inverting such an automorphism is hard (just writing down the expression of
$\varphi^{-1}$ as images of the generators will take much longer than doing the same
for $\varphi$). With the purpose of measuring the (worst case) difference between the
complexity of an automorphism $\varphi$ and that of $\varphi^{-1}$, we define the
following complexity functions $\alpha_A,\, \beta_A \colon \mathbb{N}\to \mathbb{N}$,
\begin{align*}
\alpha_A(n) & = \  \max \, \{ \| \varphi^{-1}\|_A \mid \varphi \in \aut G,\quad
\|\varphi\|_A \leqslant n\}, \\[+.2cm] \beta_A(n) & = \  \max \, \{ \| \Phi^{-1}\|_A \mid
\Phi \in \Out G,\quad \|\Phi\|_A \leqslant n\},
\end{align*}
where, by convention, we take $\max \emptyset =0$ (i.e., $\alpha_A(n)=\beta_A (n)=0$
for $n=0,1,\ldots ,r-1$).

Clearly, $\alpha_A(n)\leqslant \alpha_A(n+1)$ and $\beta_A(n)\leqslant \beta_A(n+1)$ that
is, $\alpha_A$ and $\beta_A$ are non-decreasing functions. Furthermore, it is immediate
that $\beta_A(n) = \max \, \{ \| [\varphi^{-1}] \|_A \mid \varphi \in \aut G,\quad
\|\varphi\|_A \leqslant n\}$, hence $\beta_A(n) \leqslant \alpha_A(n)$ for every
$n\geqslant 0$.

As we have emphasized in the notation, the values of $|g|_A$, $\| \varphi \|_A$ and $\|
[\varphi ]\|_A$, as well as the functions $\alpha_A$ and $\beta_A$, do depend on the
preselected generating set $A$. However, the asymptotic behaviour of these last two
functions do not depend on $A$ and so, they will constitute two invariants of the group
$G$. More precisely, changing to another finite generating system these two functions
change only up to multiplicative constants both at the domain and at the range, as proved
in the following proposition.

\begin{lem}
Let $G$ be a group, and let $A=\{ a_1,\ldots ,a_r\}$ and $B=\{ b_1,\ldots ,b_s\}$ be two
finite generating sets. Then, there exists a constant $C\geqslant 1$ such that, for all
$\varphi \in \aut G$ and $\Phi \in \Out G$, the following inequalities hold:
\begin{itemize}
\item[(i)] $\frac{1}{C}\| \varphi\|_B \leqslant \| \varphi \|_A \leqslant C\| \varphi
    \|_B$,
\item[(ii)] $\frac{1}{C}\| \Phi \|_B \leqslant \| \Phi \|_A \leqslant C\| \Phi \|_B$.
\end{itemize}
\end{lem}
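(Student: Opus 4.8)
The plan is to reduce both parts to a comparison of the two word metrics, lift that comparison to the norm of an automorphism, and then transport it through the minimum to outer automorphisms.

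First I would record the elementary comparison between $|\cdot|_A$ and $|\cdot|_B$. Since each of $A$ and $B$ generates $G$, I may express each $a_i$ as a word in $B^{\pm1}$ and each $b_j$ as a word in $A^{\pm1}$; put $M=\max_{1\le i\le r}|a_i|_B$ and $N=\max_{1\le j\le s}|b_j|_A$. Substituting these expressions into a shortest $B$-word (respectively $A$-word) for an arbitrary $g\in G$ and using subadditivity of length gives $|g|_A\le N|g|_B$ and $|g|_B\le M|g|_A$ for all $g\in G$.

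Next I would lift this to norms, where the only genuinely delicate point arises: $\|\varphi\|_A$ and $\|\varphi\|_B$ are sums indexed by \emph{different} generating sets, so they cannot be compared term by term, and I must first rewrite the $a_i$ through the $b_j$. Writing $a_i$ as a word of length at most $M$ in $B^{\pm1}$ and applying $\varphi$ expresses $a_i\varphi$ as a product of at most $M$ factors $(b_j\varphi)^{\pm1}$, so that $|a_i\varphi|_B\le M\max_j|b_j\varphi|_B\le M\|\varphi\|_B$. Combining with the metric comparison I would then estimate
\[
\|\varphi\|_A=\sum_{i=1}^r|a_i\varphi|_A\le N\sum_{i=1}^r|a_i\varphi|_B\le rMN\,\|\varphi\|_B ,
\]
and the symmetric computation gives $\|\varphi\|_B\le sMN\,\|\varphi\|_A$. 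Setting $C=MN\max\{r,s\}$ then yields (i).

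Finally, for (ii) I would carry the per-automorphism inequalities of (i) across the minimum, using the same $C$. For the upper bound, choosing $\varphi_0\in\Phi$ with $\|\varphi_0\|_B=\|\Phi\|_B$ gives $\|\Phi\|_A\le\|\varphi_0\|_A\le C\|\varphi_0\|_B=C\|\Phi\|_B$; for the lower bound, choosing $\varphi_1\in\Phi$ with $\|\varphi_1\|_A=\|\Phi\|_A$ gives $\|\Phi\|_A=\|\varphi_1\|_A\ge\frac1C\|\varphi_1\|_B\ge\frac1C\|\Phi\|_B$, since $\|\varphi_1\|_B$ is at least the minimal $B$-norm in the coset. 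I do not expect a serious obstacle here: the whole argument is bookkeeping with constants, and the one place demanding care is the index-set mismatch in the definition of the norm, which the rewriting step above resolves.
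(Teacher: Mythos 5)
Your proposal is correct and follows essentially the same route as the paper: compare the two word metrics via the constants $M$ and $N$, use the substitution $a_i\varphi=(b_{j_1}\varphi)^{\delta_1}\cdots(b_{j_k}\varphi)^{\delta_k}$ to bound $|a_i\varphi|_B$ by $M\|\varphi\|_B$, and then pass the resulting per-automorphism inequality through the minimum defining $\|\Phi\|$. The only differences are cosmetic (the roles of $M$ and $N$ are swapped and your constant is $MN\max\{r,s\}$ rather than the paper's $MNrs$), and your explicit treatment of both directions in (ii) matches the paper's ``by symmetry'' step.
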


\begin{proof}
Take $M=\max \{ |b_i|_A \mid i=1,\ldots ,s\}$, $N=\max \{ |a_i|_B \mid i=1,\ldots ,r\}$,
and let $C=MNrs\geqslant 1$. For every $\varphi \in \aut G$ we have
 $$
\begin{array}{rcl}
\| \varphi \|_B & = & |b_1\varphi|_B+\cdots +|b_s\varphi |_B \\ & \leqslant & |b_1\varphi
|_A N+ \cdots +|b_s\varphi |_A N \\ & \leqslant & N\big( |b_1|_A \|\varphi \|_A+\cdots
+|b_s|_A \|\varphi \|_A \big) \\ & = & N \big( |b_1|_A +\cdots +|b_s|_A
\big) \|\varphi \|_A \\ & \leqslant & NMs\| \varphi \|_A \\ & \leqslant & C\|\varphi \|_A.
\end{array}
 $$
By symmetry, $\|\varphi \|_A \leqslant C\|\varphi \|_B$ and (i) is proved.

To see (ii), given $\Phi \in \Out G$, choose $\varphi \in \Phi$ such that $\| \varphi
\|_A=\|\Phi \|_A$ and then
 $$
\| \Phi \|_B =\min \{ \| \theta \|_B \mid \theta \in \Phi \} \leqslant \| \varphi \|_B
\leqslant C\| \varphi \|_A =C\| \Phi \|_A.
 $$
A symmetric argument completes the proof.
\end{proof}

\begin{prop}\label{canvi}
Let $G$ be a group, and let $A=\{ a_1,\ldots ,a_r\}$ and $B=\{ b_1,\ldots ,b_s\}$ be two
finite generating sets. Then, there exists a constant $C\geqslant 1$ such that, for all
$n\geqslant 1$, the following inequalities hold:
\begin{itemize}
\item[(i)] $\frac1C \cdot \alpha_B(\left\lfloor \frac{n}{C}\right\rfloor )\leqslant
    \alpha_A(n)\leqslant C\cdot \alpha_B(Cn)$,
\item[(ii)] $\frac1C \cdot \beta_B(\left\lfloor \frac{n}{C}\right\rfloor )\leqslant
    \beta_A(n)\leqslant C\cdot \beta_B(Cn)$.
\end{itemize}
\end{prop}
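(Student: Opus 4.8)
The plan is to deduce everything directly from the preceding Lemma, whose constant we may take to be the positive integer $C=MNrs\geqslant 1$ exhibited in its proof; this single constant already controls the passage between $\|\cdot\|_A$ and $\|\cdot\|_B$ on individual automorphisms and outer automorphisms, so the work of Proposition~\ref{canvi} is merely to transport that comparison through the definitions of $\alpha$ and $\beta$. I would prove (i) in full and then observe that (ii) is word-for-word identical, reading $\Phi$ for $\varphi$ and invoking clause (ii) of the Lemma wherever clause (i) is used.

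For the upper bound $\alpha_A(n)\leqslant C\,\alpha_B(Cn)$ I would take an arbitrary $\varphi\in\aut G$ with $\|\varphi\|_A\leqslant n$. Clause (i) of the Lemma gives $\|\varphi\|_B\leqslant C\|\varphi\|_A\leqslant Cn$, so $\varphi$ lies in the set defining $\alpha_B(Cn)$ and hence $\|\varphi^{-1}\|_B\leqslant\alpha_B(Cn)$. Applying the Lemma once more, this time to $\varphi^{-1}$, yields $\|\varphi^{-1}\|_A\leqslant C\|\varphi^{-1}\|_B\leqslant C\,\alpha_B(Cn)$. Taking the maximum over all admissible $\varphi$ gives the claim.

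For the lower bound $\tfrac1C\,\alpha_B(\lfloor n/C\rfloor)\leqslant\alpha_A(n)$ I would set $m=\lfloor n/C\rfloor$. If no $\psi$ satisfies $\|\psi\|_B\leqslant m$, then $\alpha_B(m)=0$ and there is nothing to prove; otherwise choose $\psi\in\aut G$ realizing $\alpha_B(m)=\|\psi^{-1}\|_B$ with $\|\psi\|_B\leqslant m\leqslant n/C$. The Lemma gives $\|\psi\|_A\leqslant C\|\psi\|_B\leqslant n$, so by definition of $\alpha_A$ we get $\|\psi^{-1}\|_A\leqslant\alpha_A(n)$, and one further application of the Lemma yields $\alpha_B(m)=\|\psi^{-1}\|_B\leqslant C\|\psi^{-1}\|_A\leqslant C\,\alpha_A(n)$. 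Dividing by $C$ finishes (i), and the same two displays with $\Phi$ in place of $\varphi$ finish (ii).

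I do not expect a genuine conceptual obstacle here; the only point requiring care is the arithmetic bookkeeping with the constant. One must keep $C$ an integer so that both $Cn$ and $\lfloor n/C\rfloor$ land in the domain $\mathbb{N}$ of $\alpha_B$, and one must use the fact that each norm $\|\psi\|_B$ is itself an integer bounded by the relevant real quantity in order to pass cleanly through the floor (the monotonicity of $\alpha_B$ noted in the introduction is available as a backstop but is not strictly needed). Beyond this routine verification, the whole statement is just the single-(outer-)automorphism comparison of the Lemma pushed through the $\max$ defining $\alpha$ and $\beta$.
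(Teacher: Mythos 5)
Your proof is correct and follows essentially the same route as the paper's: both parts are obtained by transporting the single-(outer-)automorphism comparison of the preceding Lemma through the maxima defining $\alpha$ and $\beta$. The only cosmetic difference is that the paper establishes the lower bound by first proving the symmetric inequality $\alpha_B(n)\leqslant C\,\alpha_A(Cn)$ and then substituting $\left\lfloor n/C\right\rfloor$ (which uses the monotonicity of $\alpha_A$), whereas you argue directly with a maximizer of $\alpha_B(\left\lfloor n/C\right\rfloor)$ and thereby avoid appealing to monotonicity; both variants are valid.
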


\begin{proof}
For $n=0,1,\ldots ,r-1$, the left and middle terms in both inequalities are zeros and
the result is trivial. For $n\geqslant r$, and using the constant $C$ from the previous
lemma, we have
 $$
\begin{array}{rcl} \alpha_A(n) & = & \max \{ \|\theta^{-1}\|_A \mid \theta \in \aut (G),\,
\|\theta\|_A \leqslant n \} \\ & \leqslant & \max \{ \|\theta^{-1}\|_A \mid \theta \in
\aut (G),\, \|\theta\|_B \leqslant Cn \} \\ & \leqslant & \max \{ C\|\theta^{-1}\|_B
\mid \theta \in \aut (G),\, \|\theta\|_B \leqslant Cn \} \\ & = & C\cdot \max \{ \|
\theta^{-1}\|_B \mid \theta \in \aut (G),\, \|\theta\|_B \leqslant Cn \} \\ & = &
C\cdot \alpha_{B} (Cn).
\end{array}
 $$
By symmetry, $\alpha_B(n)\leqslant C\cdot \alpha_A (Cn)$. Hence, for every $n\geqslant r$,
 $$
\alpha_B(\left\lfloor \frac{n}{C}\right\rfloor )\leqslant C\cdot \alpha_A \left( C\cdot
\left\lfloor \frac{n}{C}\right\rfloor \right)\leqslant C\cdot \alpha_A(n),
 $$
completing the proof of (i).

The exact same argument changing $\alpha$ to $\beta$ proves (ii).
\end{proof}

Straightforward computations show that the following is an equivalence relation on the set
of non-decreasing functions from $\mathbb{N}$ to $\mathbb{N}$: $f\sim g$ if and only if
there exists a constant $C>0$ such that for all $n\geq 0$, $\frac1C \cdot g(\left\lfloor
\frac{n}{C}\right\rfloor )\leqslant f(n)\leqslant C\cdot g(Cn)$. Then,
Proposition~\ref{canvi} is precisely saying that the equivalence classes of the functions
$\alpha_A$ and $\beta_A$ \emph{do not} depend on the set of generators $A$ chosen, that is,
they are invariants of the group $G$. We shall denote them by $\alpha_G$ and $\beta_G$,
respectively.

The relevant information about these (equivalence classes of) functions is their asymptotic
growth. One says that the equivalence class of $f$ grows \emph{at least polynomially with
degree $d$} if there is a constant $L>0$ such that $Ln^d\leqslant f(n)$ for all $n\gg 0$
(i.e. for all $n\geqslant n_0$ and certain $n_0 \geqslant 0$); it is usually said \emph{at
least linearly, quadratically,} or \emph{cubically} when $d=1$, $d=2$, or $d=3$,
respectively. It is also said that $f$ grows \emph{super-polynomially} if it grows at least
polynomially with degree $d$ for every $d>0$. And $f$ grows \emph{exponentially} if there
exists constants $L>0$ and $\lambda>1$ such that $L\lambda^n\leqslant f(n)$ for all $n\gg
0$. One can also define exact growth: $f$ grows \emph{exactly polynomially with degree $d$}
if there are constants $L$ and $M$ such that $Ln^d\leqslant f(n)\leqslant Mn^d$ for all
$n\gg 0$ (which is equivalent to saying $f(n)\sim n^d$). Clearly, all these notions are
well defined not just for functions but for equivalence classes of functions.

Accordingly, we shall use the asymptotic behaviour of the functions $\alpha_G(n)$ and
$\beta_G(n)$ of a given finitely generated group $G$ to define the gap of $G$ for (outer)
automorphism inversion:

\begin{dfn}
\emph{Let $G$ be a finitely generated group and consider the (equivalence classes of)
functions $\alpha_G(n)$ and $\beta_G(n)$. We say that $G$ has \emph{linear} (resp.,
\emph{quadratic, cubic, polynomial of degree $d$, super-polynomial, exponential})
\emph{gap for} [resp., \emph{outer}] \emph{automorphism inversion} if the function
$\alpha_G(n)$ [resp., $\beta_G(n)$] grows linearly (resp., quadratically, cubically,
polynomially of degree $d$, super-polynomially, exponentially).}
\end{dfn}

This notion opens a new direction of research investigating the gap of groups for (outer)
automorphism inversion, by means of analyzing the asymptotic growth of the corresponding
functions. It is easy to see that $\alpha_G(n)$ is equivalent to a constant function if and
only if $|\aut G|<\infty$; similarly, $\beta_G(n)$ is equivalent to a constant function if
and only if $|\Out G|<\infty$. So, in this sense, interesting groups are those with
infinitely many (outer) automorphisms.

Immediately after giving these notions, one can ask many interesting questions which, as
far as we know, are open:

\begin{qs}
Is there a finitely generated group $G$ with super-poly\-no\-mial gap for (outer)
automorphism inversion? And with exponential gap ?
\end{qs}

\begin{qs}
Is there a global upper bound to the gap for (outer) automorphism inversion in the class of
finitely generated groups ? In other words, is it true that given a non-decreasing function
$f\colon \mathbb{N}\to \mathbb{N}$ there exists a finitely generated group $G$ whose gap
for (outer) automorphism inversion grows at least like $f$ ?
\end{qs}

\begin{qs}
Is there a finitely generated group $G$ with $|\Out G|=\infty$ and whose gap for
automorphism inversion is strictly bigger than its gap for outer automorphism inversion ?
\end{qs}

\bigskip

The goal of this paper is to investigate the gap for (outer) automorphism inversion in the
family of finitely generated free groups. For the free group of rank $r$, denoted $F_r$, we
shall write $\alpha_r =\alpha_{F_r}$ and $\beta_r =\beta_{F_r}$.

We can complete this project for the rank two case, which is quite special compared
with higher ranks. On one hand we shall see that, for every free basis $A$ and every
$\Phi \in \Out F_2$, $\| \Phi^{-1}\|_A =\| \Phi \|_A$; hence, $\beta_{2}(n)=n$, while
the same equality in higher rank is far from true. On the other hand, we prove that
$\alpha_2(n)$ is bounded above and below by quadratic functions i.e., $F_2$ has an
exact quadratic gap for automorphism inversion. Collecting
Theorems~\ref{upper},~\ref{lower2} and~\ref{gamma} below, we have

\begin{thm}\hfill
\begin{itemize}
\item[(i)] For $n\geqslant 4$, \ $\alpha_2(n)\leqslant \frac{(n-1)^2}{2}$,
\item[(ii)] for $n\geqslant 10$, \ $\frac{n^2}{4}-6n+42 \leqslant \alpha_2 (n)$,
\item[(iii)] for $n\geqslant 0$, \ $\beta_2(n)= n$.
\end{itemize}
\end{thm}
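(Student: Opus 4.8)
The statement collects three independent results, so I would prove the three parts separately, treating (iii) first since it feeds into (i).

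For (iii), the plan is to pass to the abelianization. Writing $\pi\colon \aut F_2 \to \GL_2(\mathbb{Z})$ for the map sending $\varphi$ to the matrix $M_\varphi$ of $\varphi$ acting on $F_2/[F_2,F_2]\cong\mathbb{Z}^2$, Nielsen's theorem gives that $\pi$ is onto and induces an isomorphism $\Out F_2\cong\GL_2(\mathbb{Z})$. The heart of the argument is a \emph{norm formula}: for $\Phi\in\Out F_2$ with matrix $M=(m_{ij})$, I claim $\|\Phi\|_A=\sum_{i,j}|m_{ij}|$. The lower bound is immediate, since for any representative $\varphi\in\Phi$ the abelianization of $a\varphi$ (resp. $b\varphi$) is the first (resp. second) row of $M$ and is unchanged under conjugation, whence $|a\varphi|_A\geqslant |m_{11}|+|m_{12}|$ and $|b\varphi|_A\geqslant |m_{21}|+|m_{22}|$. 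The upper bound requires producing a single representative that is \emph{geodesic in both coordinates}, i.e. a basis $(u,v)$ of $F_2$ whose abelianizations are the two rows of $M$ and with $|u|_A=|m_{11}|+|m_{12}|$, $|v|_A=|m_{21}|+|m_{22}|$; such a basis can be built from the Christoffel/Sturmian representatives of the two primitive rows, or equivalently by running the Euclidean reduction on $M$ and reading off the corresponding Nielsen word. This is the main obstacle of part (iii).

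Granting the norm formula, (iii) follows at once: since $\det M=\pm1$ we have $M^{-1}=\pm\,\mathrm{adj}(M)$, so the entries of $M^{-1}$ are exactly those of $M$ up to sign and permutation; hence $\sum_{i,j}|(M^{-1})_{ij}|=\sum_{i,j}|m_{ij}|$ and therefore $\|\Phi^{-1}\|_A=\|\Phi\|_A$. Consequently $\beta_2(n)=\max\{\|\Phi^{-1}\|_A\mid \|\Phi\|_A\leqslant n\}=\max\{\|\Phi\|_A\mid\|\Phi\|_A\leqslant n\}=n$, the maximum being attained (every value $n\geqslant 2$ is realized, e.g. by the class of $a\mapsto a,\ b\mapsto a^{n-2}b$), the boundary cases being covered by the convention.

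For the upper bound (i), I would factor an arbitrary $\varphi$ with $\|\varphi\|_A\leqslant n$ as $\varphi=\varphi_0\lambda_g$, where $\varphi_0$ is a norm-minimal representative of $\Phi=[\varphi]$ and $\lambda_g$ is inner; set $m=\|\varphi_0\|_A=\|\Phi\|_A\leqslant n$, so that $\|\varphi_0^{-1}\|_A=m$ by (iii). The minimality of $\varphi_0$ (no conjugation shortens its images) bounds the conjugator, giving an estimate of the form $4|g|_A\lesssim n-m$. From $\varphi^{-1}=\lambda_{g^{-1}}\varphi_0^{-1}$ one gets $a\varphi^{-1}=(g\varphi_0^{-1})(a\varphi_0^{-1})(g\varphi_0^{-1})^{-1}$ and likewise for $b$, whence $\|\varphi^{-1}\|_A\leqslant m+4|g|_A\,m$. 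Substituting the bound on $|g|_A$ and optimizing the resulting expression over $m\in\{2,\ldots,n\}$ yields the quadratic bound; the work here is the careful bookkeeping of cancellation needed to turn the heuristic constants into the sharp $\tfrac{(n-1)^2}{2}$.

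Finally, for the lower bound (ii) I would exhibit an explicit one-parameter family realizing the mechanism that makes the above estimate tight: the \emph{asymmetry of cancellation} under conjugation. Concretely one takes an outer automorphism of norm about $n/2$ whose inverse images are already reduced (no internal cancellation), represents it by a suitable $\varphi_0$, and conjugates by a word $g$ of length about $n/8$ chosen so that in $\varphi=\varphi_0\lambda_g$ the conjugation cancels against the images (keeping $\|\varphi\|_A\leqslant n$), while in $\varphi^{-1}=\lambda_{g^{-1}}\varphi_0^{-1}$ it does \emph{not}. One then checks directly that $\|\varphi^{-1}\|_A$ has reduced length at least $\tfrac{n^2}{4}-6n+42$. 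The main obstacle is designing $\varphi_0$ and $g$ so that the two required non-cancellation/cancellation patterns hold simultaneously, and verifying rigorously that no unforeseen cancellation occurs in the reduced form of the images of $\varphi^{-1}$.
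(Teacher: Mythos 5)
Your treatment of (iii) takes a genuinely different route from the paper. You prove a norm formula $\|\Phi\|_A=\|\Phi^{\rm ab}\|_1$ and then invoke the adjugate, whereas the paper decomposes every automorphism of $F_2$ as (letter permutation)(positive)(letter permutation)(inner), reduces to positive automorphisms via the monoid generators of $\aut^+F_2$, and only then abelianizes. The two are essentially equivalent in content: your ``geodesic basis'' realizing the two rows of $M$ with no length cancellation is exactly what the paper extracts from the Cohen--Metzler--Zimmermann description of bases of $F_2$, and your version is cleaner at the outer level, though the existence of that basis is the whole difficulty and you only assert it. (Minor point: by the paper's own convention $\beta_2(1)=0$, so the claimed identity really holds for $n\neq 1$; that slip is the paper's as much as yours.)

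The genuine gaps are in (i) and (ii). In (i), the estimate $4|g|_A\lesssim n-m$ is false: take $\varphi_0=\mathrm{id}$ and $g=a^k$, so that $\varphi=\eta_{a,\,a^{-k}ba^k}$ has $n=2k+2$, $m=2$, hence $n-m=2k$, while $4|g|=4k$ (and here $\varphi_0$ and $g$ are forced, since $\mathrm{id}$ is the unique norm-minimal representative of its class). What is true --- and what the paper proves by peeling off conjugating letters one at a time, each peel costing at least $2$ in norm --- is that the decomposition can be \emph{chosen} with $2|g|\leqslant n-m$. With that constant your optimization still closes, but the bookkeeping must be sharpened: the crude bound $\|\varphi^{-1}\|\leqslant m+4|g|m$ does not reach $\frac{(n-1)^2}{2}$, whereas $\|\varphi^{-1}\|_1\leqslant 4|g|\,\|\varphi_0^{-1}\|_\infty\leqslant 4|g|(m-1)\leqslant 2(n-m)(m-1)$ does. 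Separately, you use $\|\varphi_0^{-1}\|_A=m$, but (iii) only gives $\|[\varphi_0^{-1}]\|_A=m$; you must additionally argue that the inverse of a norm-minimal representative is itself norm-minimal. This is true in $F_2$ (norm-minimal representatives are cyclically reduced, and by the positive-automorphism structure their inverses are cyclically reduced too), but it does not follow formally from your norm formula and needs to be said.

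For (ii), a quantitative lower bound such as $\frac{n^2}{4}-6n+42$ cannot be established without an explicit family and an explicit length computation, and you produce neither. The paper's family is $\psi_k=\eta_{ab^{2k},\,ab^{2k+1}}\lambda_{a^{-k}b}$, with $\|\psi_k\|_1=8k+7$ and $\|\psi_k^{-1}\|_1=16k^2+8k+7$; your parameters (outer norm about $n/2$, conjugator about $n/8$) match this, but your described mechanism does not: in the paper's example nothing cancels in $\psi_k$ itself --- the conjugation simply adds $4|g|$ to the norm --- and the blow-up in $\psi_k^{-1}=\lambda_{g^{-1}}\varphi_0^{-1}$ comes from the fact that the conjugator $g$, pushed through $\varphi_0^{-1}$, becomes a word of length about $|g|\cdot\|\varphi_0^{-1}\|_\infty$ that does not cancel against the images. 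As written, (ii) is a plan, not a proof.
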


For higher rank, the problem is much more complicated and our results are less precise. We
show that $\alpha_r(n)$ grows at least polynomially with degree $r$, and $\beta_r(n)$ grows
between polynomially with degree $r-1$, and polynomially with a big enough degree.
Collecting Theorem~\ref{highrank} and Corollary~\ref{highup}, we have

\begin{thm} For every $r\geqslant 3$, there exist constants $K_r, K'_r,
K''_r, M_r > 0$ such that, for every $n\geqslant 0$,
\begin{itemize}
\item[(i)] $K_rn^{r} \leqslant \alpha_r(n)$,
\item[(ii)] $K'_rn^{r-1} \leqslant \beta_r(n)\leqslant K''_rn^{M_r}$.
\end{itemize}
\end{thm}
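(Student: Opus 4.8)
The plan is to prove the two lower bounds by exhibiting explicit one-parameter families of automorphisms, and the upper bound by a general distortion estimate. Throughout, Proposition~\ref{canvi} and the equivalence relation it induces let me argue up to multiplicative rescaling of both the variable and the value, so it suffices to produce families $\varphi_n$ with $\|\varphi_n\|_A=O(n)$ whose inverses have plain norm (resp. outer norm) bounded below by the desired power of $n$. Fix the basis $A=\{a_1,\dots,a_r\}$ and let $\chi_n\in\aut F_r$ be the \emph{cascade} $a_i\mapsto a_ia_{i-1}^{\,n}$ (with $a_0=1$), so that $\|\chi_n\|_A\sim rn$. A direct induction shows $a_i\chi_n^{-1}=c_i$ with $c_1=a_1$ and $c_i=a_ic_{i-1}^{-n}$; since each $c_i$ is cyclically reduced (it begins with $a_i$ and ends with $a_{i-1}^{-1}$), no cancellation occurs in its powers and $|c_i|_A\sim n^{\,i-1}$. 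In particular $c_r$ is cyclically reduced of length $\sim n^{r-1}$.

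For part (i) I take $\varphi_n=\chi_n\lambda_{a_r^{\,n}}$, the cascade followed by conjugation by $a_r^{\,n}$. On generators $a_i\varphi_n=a_r^{-n}(a_ia_{i-1}^{\,n})a_r^{\,n}$, each of length $O(n)$, so $\|\varphi_n\|_A=O(n)$. On the other hand $\varphi_n^{-1}=\lambda_{a_r^{-n}}\chi_n^{-1}$ sends $a_i\mapsto c_r^{\,n}\,c_i\,c_r^{-n}$; already $a_1\varphi_n^{-1}=c_r^{\,n}a_1c_r^{-n}$ is reduced of length $\sim 2n\,|c_r|_A\sim n^{r}$ (the letter $a_1$ cancels into neither $c_r^{\,n}$ nor $c_r^{-n}$, as $r\ge 3$). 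Hence $\|\varphi_n^{-1}\|_A\gtrsim n^{r}$ and, after rescaling, $K_rn^{r}\le\alpha_r(n)$. Note that $[\varphi_n^{-1}]=[\chi_n^{-1}]$, so the extra factor of $n$ is pure conjugation, invisible to the outer norm — which is exactly why the outer bound will be one degree lower.

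For the lower bound in (ii) I use $\Phi=[\chi_n]$ and detect its inverse through the abelianization $H_1(F_r)\cong\mathbb Z^{r}$, on which inner automorphisms act trivially. Writing $M$ for the integer matrix of $\psi\in\aut F_r$ on $H_1$, the inequality $|g|_A\ge\max_j|(\overline g)_j|$ together with conjugation-invariance of abelianization gives the key estimate $\|[\psi]\|_A\ge\sum_i\max_j|M_{ij}|$. For $\psi=\chi_n^{-1}$ the matrix of $\chi_n$ is $I+nL$ with $L$ the subdiagonal shift, so $M=(I+nL)^{-1}=\sum_{k\ge 0}(-nL)^k$ has the entry $(-1)^{r-1}n^{r-1}$ in position $(r,1)$; thus $\|[\chi_n^{-1}]\|_A\ge n^{r-1}$. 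Since $\|[\chi_n]\|_A\le\|\chi_n\|_A\sim rn$, rescaling yields $K'_rn^{r-1}\le\beta_r(n)$. (This cannot be deduced from part (i), since the known inequality runs $\beta_r\le\alpha_r$.)

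The genuinely hard part is the polynomial upper bound $\beta_r(n)\le K''_rn^{M_r}$, which unlike the lower bounds must hold for \emph{every} outer automorphism of norm $\le n$. The abelian part is easy: the matrix on $H_1$ of a norm-$\le n$ automorphism has entries $\le n$, so by Cramer its inverse has entries $\le (r-1)!\,n^{r-1}$, bounding the abelianization of $\Phi^{-1}$. The obstacle is the nonabelian contribution — controlling the cancellation that occurs when one reconstructs $a_i\varphi^{-1}$ from a minimal-norm representative $\varphi$ of $\Phi$, where additive errors can compound through the inversion process. I plan to fix a topological representative of $\Phi$ on a graph of size $O(n)$ and use the bounded cancellation lemma to bound how reduced lengths of preimages grow as $\varphi$ is inverted, the aim being a bound of the form $n^{M_r}$ with $M_r$ depending only on $r$; an alternative route is a polynomial bound on the asymmetry of the Lipschitz metric on Outer space, transferred to the norm. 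I do not expect $M_r$ to be sharp, and the careful bookkeeping of cancellation constants — closing, or even narrowing, the gap between the lower degree $r-1$ and this upper degree — is where the real difficulty lies.
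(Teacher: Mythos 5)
Your two lower bounds are correct and are essentially the paper's own argument. Your cascade $\chi_n\colon a_i\mapsto a_ia_{i-1}^{\,n}$ is, up to reversing the order of the generators, exactly the automorphism $\varphi_p$ of Lemma~\ref{tecnic} ($a_i\mapsto a_ia_{i+1}^p$), your matrix computation $(I+nL)^{-1}=\sum_k(-nL)^k$ is Lemma~\ref{coin}, the estimate $\|[\psi]\|\geqslant\max_{i,j}|M_{ij}|$ is the content of Lemma~\ref{ab}, and your $\varphi_n=\chi_n\lambda_{a_r^{\,n}}$ for part~(i) is the paper's $\psi_p=\varphi_p\lambda_{a_1^p}$. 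Your reducedness check for $c_r^{\,n}a_1c_r^{-n}$ (via the first/last letters of the $c_i$, valid since $r\geqslant 3$) matches Lemma~\ref{tecnic}(iii); the paper sums over the generators $a_3,\dots,a_r$ where you use only $a_1$, but both give $\gtrsim n^r$. So (i) and the left half of (ii) stand.

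The genuine gap is the upper bound $\beta_r(n)\leqslant K''_rn^{M_r}$, which you only outline as a plan. Your first proposed route (bounded cancellation on a topological representative) is unlikely to close on its own: bounded-cancellation constants control a single application of $\varphi$, but inverting compounds them multiplicatively through a product of Whitehead/Nielsen factors whose number is not \emph{a priori} polynomially bounded in $n$, and nothing in that sketch prevents the degree from depending on $n$ rather than only on $r$. This is essentially why the analogous question for $\alpha_r$ (Question~\ref{w}) is left open in the paper. Your second route is the one that actually works, but the key point is that the ``polynomial bound on the asymmetry of the Lipschitz metric'' is not something you need to prove: it is the Algom-Kfir--Bestvina theorem \cite[Theorem~23]{AB} (Theorem~\ref{AB} here), $d(x,y)\leqslant M(r,\epsilon)\,d(y,x)$ on the $\epsilon$-thick part. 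The only computation required to transfer it to the norm is that for $x$ the balanced rose one has $d(x,[\varphi]\cdot x)=\log\|[\varphi]\|_\infty$ (minimizing the maximal slope over all differences of markings exactly realizes the minimum over representatives of the outer class), whence $\|[\varphi^{-1}]\|_\infty\leqslant\|[\varphi]\|_\infty^{M_r}$ with $M_r=M(r,1/r)$, and Proposition~\ref{constants} converts $\|\cdot\|_\infty$ to $\|\cdot\|_1$. Without invoking that theorem (or proving an equivalent asymmetry estimate), your proposal does not establish the right-hand inequality in (ii).
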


To our knowledge, nothing else is know about the gap for (outer) automorphism inversion in
free groups of rank bigger than two. In particular, we highlight the following interesting
open questions:

\begin{qs}\label{q}
What is the exact gap for (outer) automorphism inversion in free groups $F_r$, with
$r\geqslant 3$?
\end{qs}

\begin{qs}\label{w}
Is there a polynomial upper bound for the gap for automorphism inversion in free groups
$F_r$, with $r\geqslant 3$?
\end{qs}

\section{Free groups}

\subsection{Notation}

Let $A_r =\{ a_1,\, \ldots ,a_r,\, a_1^{-1},\, \ldots ,\, a_r^{-1} \}$ be an
\emph{alphabet} of $r$ symbols together with their formal inverses (a total of $2r$ symbols
different from each other). All along the paper we assume $r\geqslant 2$ to avoid trivial
cases.

The set of all words on $A_r$, including the empty one denoted 1, together with the
operation of concatenation of words, forms a free monoid denoted $A_r^*$. For any subset
$S\subseteq A_r^*$, the symbol $S^*$ denotes the submonoid generated by $S$, namely the set
of all (arbitrarily long) finite formal products of elements in $S$. For example, $\{
a_1,\, \ldots ,a_r \}^*$ is precisely the set of all \emph{positive} words on the alphabet
$A_r$.

Let $F_r =\langle a_1, \ldots ,a_r\rangle$ be the free group (of rank $r$) on the alphabet
$A_r$, i.e. $A_r^*/\sim$ where $\sim$ is the congruence generated by the elementary
reductions $a_ia_i^{-1}\sim a_i^{-1}a_i\sim 1$. A word of $A_r^*$ is said to be
\emph{(cyclically) reduced} if it contains no (cyclic) factor of the form $a_i^{\epsilon}
a_i^{-\epsilon}$, $\epsilon =\pm 1$. Given a word $w\in A_r^*$, we shall denote by
$\overline{w}$ its reduction, namely the unique reduced word representing the same element
of $F_r$ as $w$. We shall do the standard abuse of notation consisting on using words,
specially reduced ones, to refer to elements of $F_r$.

Note that the length $|w|_A$ of an element $w\in F_r$ is precisely the number of letters in
$\overline{w}$; we shall simplify notation and just denoted it by $|w|$ (there will be no
risk of confusion because, since now on, we shall always work with respect to the
preselected generating set $A$).

\bigskip

Let us consider now automorphisms. Since every $\varphi \in \aut F_r$ is determined by the
images of $a_1,\, \ldots ,\, a_r$, say $a_1\varphi =u_1,\, \ldots ,\, a_r\varphi =u_r$, we
shall adopt the notation $\varphi =\eta_{u_1,\, \ldots ,\, u_r}$, on occasion. When all of
the $u_i$'s are positive words, we say that $\eta_{u_1,\, \ldots ,\,u_r}$ is a
\emph{positive} automorphism (also known in the literature as \emph{invertible
substitution}, see e.g.~\cite{WenWen}). The submonoid of $\aut F_r$ consisting of all
positive automorphisms is denoted by $\aut^+ F_r$. An automorphism $\eta_{u_1,\, \ldots
,\,u_r}$ is said to be \emph{cyclically reduced} when $u_1,\, \ldots ,\,u_r$ are all
cyclically reduced.

As above, we shall also omit the reference to $A$ from the notation for the norm of an
automorphism $\varphi \in \aut F_r$, the norm of an outer automorphism $\Phi \in \Out F_r$,
and also from the gap functions:
 $$
\|\varphi \| = |a_1\varphi| +\cdots +|a_r\varphi|,
 $$
 $$
\|\Phi \| = \min \, \{ \|\varphi \| \mid \varphi \in \Phi \},
 $$
 $$
\alpha_r(n) =\max \, \{ \| \varphi^{-1}\| \mid \varphi \in \aut F_r,\quad \|\varphi\|
\leqslant n\},
 $$
 $$
\beta_r(n) =\max \, \{ \| \Phi^{-1}\| \mid \Phi \in \Out F_r,\quad \|\Phi\| \leqslant n\}.
 $$
Note that there are exactly $r!2^r$ automorphisms with $\| \varphi \|=r$, namely those of
the form $a_1\mapsto a_{1\pi}^{\epsilon_1},\, \ldots ,\, a_r\mapsto a_{r\pi}^{\epsilon_r}$,
where $\pi \in S_r$ is a permutation of $\{ a_1,\, \ldots ,\, a_r\}$ and $\epsilon_i =\pm
1$. These automorphisms are the simplest ones and are called \emph{letter permutation}
automorphisms of $F_r$. They will be useful to reduce the number of cases in our arguments
below.

Observe also that the natural inclusion $\aut F_r \hookrightarrow \aut F_{r+1}$ defined by
fixing the last generator, gives the inequality $\alpha_{r+1}(n+1)\geqslant 1+\alpha_r(n)$.

The following proposition is another reason for omitting the reference to $A$ from the
notation. It presents a stronger form of Proposition~\ref{canvi} when restricting our
attention to free generating sets: given two bases $A$ and $B$ of $F_r$, the functions
$\alpha_A$ and $\alpha_B$ are not only equivalent but exactly equal i.e.,
$\alpha_A(n)=\alpha_B(n)$ for all $n\geqslant 0$. The same is true for the $\beta$
functions.

\begin{prop}\label{cb}
Let $A$ and $B$ be two bases of $F_r$. Then, $\alpha_A(n)=\alpha_B(n)$ and
$\beta_A(n)=\beta_B(n)$, for all $n\geqslant 0$.
\end{prop}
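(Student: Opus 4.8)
The plan is to exploit the fact that any two bases of $F_r$ are related by an automorphism of $F_r$, and that the norm functions are invariant under composition with automorphisms in a way that preserves the inverse structure. Concretely, if $A=\{a_1,\ldots,a_r\}$ and $B=\{b_1,\ldots,b_r\}$ are both bases, then there is a unique automorphism $\tau\in\aut F_r$ sending $a_i$ to $b_i$ for each $i$. The key observation I would establish first is a change-of-basis identity for the norm: for any $\varphi\in\aut F_r$, one has $\|\varphi\|_B=\|\tau^{-1}\varphi\tau\|_A$. This should follow by direct computation, since $|b_i\varphi|_B$ is the $B$-length of $b_i\varphi$, and applying $\tau^{-1}$ (which carries $B$ to $A$ isometrically, i.e. $|w|_B=|w\tau^{-1}|_A$) translates each $B$-length of a word into the $A$-length of its $\tau^{-1}$-image.

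Granting this identity, the proof becomes a clean bijection argument. The map $\varphi\mapsto\tau^{-1}\varphi\tau$ is a bijection of $\aut F_r$ onto itself (conjugation by $\tau$), and under it the norm in basis $B$ of $\varphi$ equals the norm in basis $A$ of its image. Crucially, inversion is respected: $(\tau^{-1}\varphi\tau)^{-1}=\tau^{-1}\varphi^{-1}\tau$, so $\|\varphi^{-1}\|_B=\|\tau^{-1}\varphi^{-1}\tau\|_A=\|(\tau^{-1}\varphi\tau)^{-1}\|_A$. Therefore the set $\{\|\varphi^{-1}\|_B : \varphi\in\aut F_r,\ \|\varphi\|_B\leqslant n\}$ is carried bijectively, and value-for-value, onto the set $\{\|\psi^{-1}\|_A : \psi\in\aut F_r,\ \|\psi\|_A\leqslant n\}$ by setting $\psi=\tau^{-1}\varphi\tau$. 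Taking maxima gives $\alpha_B(n)=\alpha_A(n)$ for every $n$.

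For the outer statement, I would carry the same conjugation down to $\Out F_r$. Conjugation by $\tau$ descends to a well-defined automorphism $[\varphi]\mapsto[\tau^{-1}\varphi\tau]$ of $\Out F_r$, because it sends inner automorphisms to inner automorphisms (indeed $\tau^{-1}\lambda_g\tau=\lambda_{g\tau}$). Since $\|\Phi\|_B=\min\{\|\varphi\|_B:\varphi\in\Phi\}$ and the change-of-basis identity holds term by term inside the coset, the minimum over $\Phi$ in basis $B$ equals the minimum over $[\tau^{-1}\varphi\tau]$ in basis $A$; hence $\|\Phi\|_B=\|[\tau^{-1}\varphi\tau]\|_A$ where $\varphi\in\Phi$. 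Repeating the maximum argument with inversion respected at the outer level yields $\beta_B(n)=\beta_A(n)$.

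The main obstacle, and the one place where care is genuinely needed, is establishing the isometry property $|w|_B=|w\tau^{-1}|_A$ cleanly and checking that the identity $\|\varphi\|_B=\|\tau^{-1}\varphi\tau\|_A$ holds \emph{exactly}, not merely up to constants as in Proposition~\ref{canvi}. This is precisely where free bases do better than arbitrary generating sets: because $\tau$ is an automorphism of the free group taking one basis to another, it induces a genuine isometry between the two word metrics (a reduced word over $B$ has the same length as its image, a reduced word over $A$), rather than a mere bi-Lipschitz equivalence. I would verify this length-preservation by noting that the $B$-geodesic expression of an element pulls back under $\tau^{-1}$ to an $A$-expression of the same combinatorial length, and conversely, so no length is lost or gained. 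Once this exact isometry is in hand, every inequality in the argument collapses to an equality and both claimed identities follow without any loss of multiplicative constants.
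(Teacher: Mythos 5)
Your proof is correct and follows essentially the same route as the paper's: introduce the base-change automorphism (the paper's $\psi$, with $b_i\psi=a_i$, is your $\tau^{-1}$), use the exact isometry $|w|_B=|w\psi|_A$ to obtain an exact conjugation identity for norms, and then transport the sets being maximized via the inverse-respecting bijection $\varphi\mapsto\psi^{-1}\varphi\psi$, which descends to $\Out F_r$. One small slip: with the paper's right-action convention the identity should read $\|\varphi\|_B=\|\tau\varphi\tau^{-1}\|_A$ (equivalently $\|\psi^{-1}\varphi\psi\|_A$), not $\|\tau^{-1}\varphi\tau\|_A$; since conjugation in either direction is an inverse-respecting bijection of $\aut F_r$, this does not affect the conclusion.
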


\begin{proof}
Let $\psi \colon F_r \to F_r$ be the automorphism defined by $b_i\psi =a_i$, $i=1,\ldots
,r$. It is clear that, for every $w\in F_r$, $|w|_B=|w\psi|_A$. Now, for every $\varphi \in
\aut F_r$, we have
 $$
\begin{array}{rcl}
\| \varphi \|_B & = & |b_1\varphi |_B +\cdots +|b_r \varphi |_B \\ & = & |a_1\psi^{-1}\varphi
|_B +\cdots +|a_r\psi^{-1}\varphi |_B \\ & = & |a_1\psi^{-1}\varphi \psi|_A +\cdots
+|a_r\psi^{-1}\varphi \psi |_A \\ & = & \| \psi^{-1}\varphi \psi \|_A.
\end{array}
 $$
Furthermore, for every $\Phi \in \Out F_r$, we also have
 $$
\begin{array}{rcl}
\| \Phi \|_B & = & \min \{ \| \varphi \|_B \mid \varphi \in \Phi \} \\ & = & \min \{ \|
\psi^{-1}\varphi \psi \|_A \mid \varphi \in \Phi \} \\ & = & \min \{ \| \nu \|_A \mid \nu \in
\Psi^{-1}\Phi \Psi\} \\ & = & \| \Psi^{-1} \Phi \Psi \|_A,
\end{array}
 $$
where $\Psi =[\psi ]\in \Out F_r$. And from these equalities we deduce that, for every
$n\geqslant 0$,
 $$
\begin{array}{rcl}
\alpha_B(n) & = & \max \{ \|\varphi^{-1}\|_B \mid \varphi \in \aut F_r,\quad \| \varphi
\|_B \leqslant n \} \\ & = & \max \{ \|\psi^{-1}\varphi^{-1}\psi \|_A \mid \varphi \in
\aut F_r,\quad \| \psi^{-1}\varphi \psi \|_A \leqslant n\} \\ & = & \max \{ \|\nu^{-1}\|_A
\mid \nu \in \aut F_r, \quad \| \nu \|_A \leqslant n\} \\ & = & \alpha_A(n).
\end{array}
 $$
A similar argument shows that $\beta_B(n)=\beta_A(n)$.
\end{proof}

\subsection{The $p$-norm of an automorphism}

To prove the main results in the paper, we need to introduce a technical generalization
of the notion of norm for an (outer) automorphism (and its corresponding gap
functions). We shall use standard facts about norms on real (or complex) vectors and
matrices. Recall that the maps $\| {\cdot}\|_p \colon \mathbb{R}^k \to \mathbb{R}$,
$\|(x_1,\ldots ,x_k)\|_p =(|x_1|^p+\cdots +|x_k|^p)^{1/p}$ (for $p\in \mathbb{R}^+$)
and $\| {\cdot}\|_{\infty} \colon \mathbb{R}^k \to \mathbb{R}$, $\|(x_1,\ldots
,x_k)\|_{\infty}=\max \, \{ |x_1|,\ldots ,|x_k|\}$ are \emph{vector norms} i.e., they
satisfy the following axioms: (1) $\|\textbf{x}\|_p \geqslant 0$ with equality if and
only if $\textbf{x}=\textbf{0}$; (2) $\| \mu \textbf{x}\|_p =|\mu |\|\textbf{x}\|_p$;
and (3) $\|\textbf{x}+\textbf{y}\|_p \leqslant \|\textbf{x}\|_p +\|\textbf{y}\|_p$.

Let us extend these notions to the non-abelian context, via the length function. For $p\in
\overline{\mathbb{R}}^+ =\mathbb{R}^+\cup \{ \infty\}$ and $\textbf{w}=(w_1,\ldots, w_k)\in
F_r^k$, we define
 $$
\| \textbf{w}\|_p =\| (w_1, \ldots ,w_k)\|_p = (|w_1|^p +\cdots +|w_k|^p )^{1/p}
 $$
for $p\in \mathbb{R}^+$, and
 $$
\| \textbf{w}\|_{\infty} =\| (w_1, \ldots ,w_k)\|_{\infty} = \max \, \{ |w_1|,\ldots
,|w_k|\}
 $$
for $p=\infty$. Note that the notation is coherent with the fact $\| \textbf{w}\|_{\infty}
=\lim_{p\to \infty} \| \textbf{w} \|_p$.

Observe that this map $F_r^k \to \mathbb{R}$ can be expressed in terms of the corresponding
vector norm, $\| (w_1, \ldots ,w_k)\|_p =\| (|w_1|, \ldots ,|w_k| )\|_p$. Hence, it
satisfies the following properties:
\begin{itemize}
\item[1)] (positivity) $\| \textbf{w}\|_p \geqslant 0$, with equality if and only if
    $\textbf{w}=(1,\ldots ,1)$;
\item[2)] (powers) $\| (w_1^n,\ldots ,w_k^n)\|_p \leqslant |n|\|(w_1,\ldots ,w_k)\|_p$;
\item[3)] (triangular inequality) $\| (v_1w_1, \ldots ,v_kw_k)\|_p \leqslant \|
    (v_1,\ldots ,v_k)\|_p +\| (w_1,\ldots ,w_k) \|_p$.
\end{itemize}
By analogy, we shall refer to these three properties by naming $\| {\cdot}\|_p$ as the
\emph{$p$-norm} in $F_r^k$.

Let us move now to morphisms. Thinking of endomorphisms of $F_r$ (and, in particular,
automorphisms) as $r$-tuples of elements, $\varphi \leftrightarrow (a_1\varphi, \ldots
,a_r\varphi)$, we define the \emph{$p$-norm} of an endomorphism $\varphi \in \End F_r$,
$p\in \overline{\mathbb{R}}^+$, as
 $$
\| \varphi \|_p =\|(a_1\varphi, \ldots ,a_r\varphi)\|_p.
 $$
Given $\Phi \in \Out F_r$, define also
 $$
\|\Phi \|_p = \min \, \{ \|\varphi \|_p \mid \varphi \in \Phi \}.
 $$
Of course, $\|\varphi \|_1$ and $\|\Phi \|_1$ equal, respectively, the values $\|\varphi\|$
and $\|\Phi \|$ defined in the previous section.

Further, we define the corresponding gap functions $\alpha^p_r$ and $\beta^p_r$ in the
natural way:
\begin{align*}
\alpha_r^p(n) & = \ \max \, \{ \| \varphi^{-1}\|_p \mid \varphi \in \aut F_r,\quad
\|\varphi\|_p \leqslant n\},\\[+.2cm] \beta_r^p(n) & =  \ \max \, \{ \| \Phi^{-1}\|_p \mid
\Phi \in \Out F_r,\quad \|\Phi \|_p \leqslant n\}.
\end{align*}
Clearly, these are non-decreasing functions from $\mathbb{N}$ to $\mathbb{R}$. Again,
$\alpha_r$ and $\beta_r$ from the previous section are just $\alpha_r^1$ and $\beta_r^1$,
respectively. Furthermore, the following proposition states that the functions $\alpha_r^p$
belong to the same equivalence class for all different values of $p\in
\overline{\mathbb{R}}^+$; the same happens for the functions $\beta_r^p$ (note that the
equivalence relation defined above for functions from $\mathbb{N}$ to $\mathbb{N}$ can
naturally be extended to functions from $\mathbb{N}$ to $\mathbb{R}$). For this reason, we
shall restrict our attention to the case $p=1$ (with occasional references to the
$\infty$-norm for some technical arguments).

\begin{prop}\label{constants}
For all $p,q\in \overline{\mathbb{R}}^+$ there exists a natural number $C=C_{p,q,r}>0$ such
that
 $$
\frac{1}{C}\|\varphi\|_q \leqslant \|\varphi \|_p \leqslant C\|\varphi \|_q \quad \text{ and
}\quad \frac{1}{C}\| \Phi \|_q \leqslant \| \Phi \|_p \leqslant C\| \Phi \|_q
 $$
hold for all $\varphi \in \End F_r$ and $\Phi \in \Out F_r$. Furthermore, for all
$n\geqslant 0$,
 $$
\frac{1}{C}\alpha_r^p\left( \left\lfloor\frac{n}{C}\right\rfloor \right) \leqslant
\alpha_r^q(n) \leqslant C\alpha_r^p (Cn),
 $$
 $$
\frac{1}{C}\beta_r^p\left(\left\lfloor \frac{n}{C}\right\rfloor \right)\leqslant \beta_r^q(n)
\leqslant C\beta_r^p (Cn).
 $$
\end{prop}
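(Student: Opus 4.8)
The plan is to reduce the entire statement to the classical fact that all $\ell^p$ vector norms on the finite-dimensional space $\mathbb{R}^r$ are equivalent, and then to repeat the bookkeeping already carried out for Proposition~\ref{canvi}. The crucial observation is that, by definition, $\|\varphi\|_p = \|(|a_1\varphi|,\ldots,|a_r\varphi|)\|_p$ is genuinely the vector $\ell^p$-norm of the integer vector $(|a_1\varphi|,\ldots,|a_r\varphi|)\in\mathbb{N}^r\subseteq\mathbb{R}^r$. Thus, passing from the $q$-norm to the $p$-norm of an endomorphism is nothing more than comparing two vector norms at the same point of $\mathbb{R}^r$, and the role played by the change of generating set in the earlier results is here played by the change of exponent.

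First I would fix $p,q\in\overline{\mathbb{R}}^+$ and invoke the equivalence of norms on the finite-dimensional space $\mathbb{R}^r$: there is a real constant $c=c(p,q,r)\geqslant 1$ with $\frac1c\|\mathbf{x}\|_q\leqslant\|\mathbf{x}\|_p\leqslant c\|\mathbf{x}\|_q$ for every $\mathbf{x}\in\mathbb{R}^r$. Concretely, one may extract $c$ from the chain $\|\mathbf{x}\|_\infty\leqslant\|\mathbf{x}\|_s\leqslant r^{1/s}\|\mathbf{x}\|_\infty$ applied with $s=p$ and $s=q$, which also folds in the cases $p=\infty$ or $q=\infty$ (reading $r^{1/\infty}=1$). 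Setting $C=\lceil c\rceil\in\mathbb{N}$ and applying these inequalities to $\mathbf{x}=(|a_1\varphi|,\ldots,|a_r\varphi|)$ yields the first displayed inequality for every $\varphi\in\End F_r$, and in particular for every $\varphi\in\aut F_r$. For the outer version I would argue exactly as in part (ii) of the opening Lemma: choosing $\varphi\in\Phi$ with $\|\varphi\|_p=\|\Phi\|_p$ gives $\|\Phi\|_q\leqslant\|\varphi\|_q\leqslant C\|\varphi\|_p=C\|\Phi\|_p$, and the symmetric inequality (swapping $p$ and $q$) completes the bound $\frac1C\|\Phi\|_q\leqslant\|\Phi\|_p\leqslant C\|\Phi\|_q$.

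Finally, the two gap-function inequalities follow by copying the argument of Proposition~\ref{canvi} verbatim, with $C$ now the constant just produced. For the upper bound, if $\|\varphi\|_q\leqslant n$ then $\|\varphi\|_p\leqslant Cn$ and $\|\varphi^{-1}\|_q\leqslant C\|\varphi^{-1}\|_p$, so $\alpha_r^q(n)\leqslant C\alpha_r^p(Cn)$; swapping $p$ and $q$ gives $\alpha_r^p(n)\leqslant C\alpha_r^q(Cn)$, and replacing $n$ by $\lfloor n/C\rfloor$, together with the monotonicity of $\alpha_r^q$ and the inequality $C\lfloor n/C\rfloor\leqslant n$, yields the left-hand bound. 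The identical computation with $\beta$ in place of $\alpha$ (using the outer-norm inequalities from the previous step) settles the $\beta$ case. I do not expect any genuine obstacle: the only points needing the slightest care are rounding the real equivalence constant up to a natural number, folding the cases $p=\infty$ or $q=\infty$ into the same chain of inequalities, and the floor--monotonicity shuffle, all of which are routine and already present in the proof of Proposition~\ref{canvi}.
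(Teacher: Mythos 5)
Your proposal is correct and follows essentially the same route as the paper: reduce everything to the equivalence of vector $\ell^p$-norms on $\mathbb{R}^r$ applied to the length vector $(|a_1\varphi|,\ldots,|a_r\varphi|)$, handle the outer norms by picking a representative realizing the minimum, and then repeat the bookkeeping of Proposition~\ref{canvi} for the gap functions. The only (harmless) difference is that you make the equivalence constant explicit via the chain through $\|\cdot\|_\infty$ rather than citing the general finite-dimensional equivalence of norms.
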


\begin{proof} It is well-known (see \cite[Corollary~5.4.5]{HJ}) that the exact similar fact holds
for the corresponding vector norms: there exists a positive constant, and so a natural
number $C=C_{p,q,r}$, such that
 $$
\frac{1}{C}\|\textbf{x}\|_q \leqslant \|\textbf{x}\|_p \leqslant C\|\textbf{x}\|_q
 $$
for every $\textbf{x}\in \mathbb{R}^r$. Now $ \frac{1}{C}\|\varphi\|_q \leqslant \|\varphi
\|_p \leqslant C\|\varphi \|_q$ follows immediately from the equality
 $$
\| \varphi \|_p =\|(a_1\varphi, \ldots ,a_r\varphi)\|_p =\|(|a_1\varphi|, \ldots
,|a_r\varphi |)\|_p.
 $$
On the other hand, since $\| \Phi \|_q = \| \theta \|_q$ for some $\theta \in \Phi$, we get
 $$
\| \Phi \|_p = \min \,\{ \|\varphi \|_p \mid \varphi \in \Phi \} \leqslant \| \theta \|_p
\leqslant C\| \theta \|_q = C\| \Phi \|_q
 $$
and $\frac{1}{C}\| \Phi \|_q \leqslant \| \Phi \|_p \leqslant C\| \Phi \|_q$ follows by
symmetry.

For the second part of the statement, we have
\begin{align*}
\alpha_r^q (n) & =  \ \max \, \{ \|\varphi^{-1} \|_q \mid \varphi \in \aut F_r,\quad \|
\varphi\|_q \leqslant n \} \\ {} & \leqslant \ \max \, \{ \|\varphi^{-1} \|_q \mid \varphi
\in \aut F_r,\quad \| \varphi\|_p \leqslant Cn \} \\ {} & \leqslant \ C \max \, \{
\|\varphi^{-1} \|_p \mid \varphi \in \aut F_r,\quad \| \varphi\|_p \leqslant Cn \} \\
 {} & = \ C\alpha_r^p (Cn)
\end{align*}
for all $n$. Symmetrically, $\alpha_r^p (n) \leqslant C\alpha_r^q (Cn)$. Now, for every
natural number $n$, write $C\lfloor\frac{n}{C}\rfloor \leqslant n$ and we have $\alpha_r^p
\left( \lfloor\frac{n}{C}\rfloor\right) \leqslant C\alpha_r^q \left( C\lfloor\frac{n}{C}
\rfloor\right)\leqslant C\alpha_r^q (n)$ and so, $\frac{1}{C}\alpha_r^p\left(
\lfloor\frac{n}{C}\rfloor \right)\leqslant \alpha_r^q(n)$.

The same argument gives the corresponding inequalities for the $\beta$ functions.
\end{proof}

The following lemmas state some basic properties of norms of automorphisms and outer
automorphisms of free groups, that will be useful later.

\begin{lem}\label{compo}
Let $\varphi,\, \theta,\, \psi_1,\, \psi_2 \in \aut F_r$ with $\psi_1$ and $\psi_2$ letter
permuting, and let $w \in F_r \setminus \{ 1\}$. Then:
\begin{itemize}
\item[(i)] $\frac{\| \varphi \|_1}{r}\leqslant \| \varphi \|_{\infty} <\| \varphi \|_1$,
\item[(ii)] $\|\psi_1 \varphi \psi_2 \|_p =\| \varphi \|_p$ for all $p\in
    \overline{\mathbb{R}}^+$,
\item[(iii)] $||\varphi\theta||_1 \leqslant ||\varphi||_1 \cdot ||\theta||_{\infty}
    <||\varphi||_1 \cdot ||\theta||_1$,
\item[(iv)] $||\lambda_w\varphi||_1 \leqslant (2r|w|+r-2)||\varphi||_{\infty} <
    (2r|w|+r-2)||\varphi||_1$.
\end{itemize}
\end{lem}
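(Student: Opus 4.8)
The plan is to prove each of the four inequalities essentially by direct computation from the definitions, handling them roughly in the stated order since later parts lean on the $\infty$-norm bound in (i).

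\medskip

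For (i), I would start from the observation that $\|\varphi\|_1 = |a_1\varphi| + \cdots + |a_r\varphi|$ is a sum of $r$ nonnegative terms, while $\|\varphi\|_\infty = \max_i |a_i\varphi|$. The upper bound $\|\varphi\|_1 \leqslant r\|\varphi\|_\infty$ is the standard inequality (each summand is at most the max), giving $\frac{\|\varphi\|_1}{r} \leqslant \|\varphi\|_\infty$. For the strict inequality $\|\varphi\|_\infty < \|\varphi\|_1$, the point is that $\varphi$ is an automorphism, so \emph{every} $a_i\varphi \neq 1$, whence $|a_i\varphi| \geqslant 1$ for all $i$; since $r \geqslant 2$, the maximum summand is strictly smaller than the full sum because at least one other summand contributes at least $1$. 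I would phrase this as: if $|a_j\varphi|$ realizes the maximum, then $\|\varphi\|_1 = |a_j\varphi| + \sum_{i\neq j}|a_i\varphi| \geqslant \|\varphi\|_\infty + 1 > \|\varphi\|_\infty$.

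\medskip

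For (ii), the key fact is that a letter-permuting automorphism $\psi$ sends each reduced word to a reduced word of the \emph{same length} (it just permutes and inverts letters), so $|w\psi| = |w|$ and $|w\psi^{-1}| = |w|$ for all $w$. Post-composing, $a_i(\varphi\psi_2) = (a_i\varphi)\psi_2$ has the same length as $a_i\varphi$, so the multiset of lengths $\{|a_i\varphi\psi_2|\}$ equals $\{|a_i\varphi|\}$; pre-composing by $\psi_1$ merely permutes and inverts the generators, so $\{a_i(\psi_1\varphi\psi_2)\}$ is, as a tuple, a permutation of the images $\{(a_i\varphi\psi_2)^{\pm 1}\}$, leaving the multiset of lengths unchanged. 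Since every $p$-norm $\|\cdot\|_p$ depends only on this multiset $\{|a_i(\cdot)|\}$ of lengths, it is invariant, giving $\|\psi_1\varphi\psi_2\|_p = \|\varphi\|_p$. For (iii), I would write $a_i(\varphi\theta) = (a_i\varphi)\theta$ and, using $|\cdot|\leqslant\|\cdot\|$-type estimates from the introduction (namely $|g\theta| \leqslant |g|\cdot\|\theta\|_\infty$, which follows since writing $a_i\varphi$ as a reduced word of length $|a_i\varphi|$ and applying $\theta$ letter-by-letter produces a word of length at most $|a_i\varphi|\cdot\max_j|a_j\theta|$), sum over $i$ to get $\|\varphi\theta\|_1 \leqslant \|\varphi\|_1\cdot\|\theta\|_\infty$; the final strict inequality is then immediate from the strict part of (i) applied to $\theta$.

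\medskip

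The main obstacle, and the step I would spend the most care on, is (iv). Here $\lambda_w$ is conjugation by $w$, so $a_i(\lambda_w\varphi)$ — I would first need to unwind the composition order carefully with the right-action convention. The estimate must account for $\|\lambda_w\varphi\|_1 = \sum_i |w^{-1}(a_i\varphi)w|$, roughly speaking, where each conjugate has length at most $|a_i\varphi| + 2|w|$ before reduction but can be shorter after cancellation. The coefficient $2r|w| + r - 2$ rather than the naive $2r|w|$ suggests the bound is gotten by summing the crude estimate $|a_i\varphi| + 2|w| \leqslant \|\varphi\|_\infty + 2|w|$ over the $r$ generators and then sharpening: the total conjugation contribution is $2r|w|\|\varphi\|_\infty$-flavoured, and the extra $r-2$ correction likely comes from the fact that not all $r$ copies of $w$ can survive simultaneously, or from bounding $\sum_i|a_i\varphi|$ against $\|\varphi\|_\infty$ using that there are at least two nontrivial images. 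I expect to express everything in terms of $\|\varphi\|_\infty$, apply the triangle inequality for lengths together with $|a_i\varphi|\leqslant\|\varphi\|_\infty$, and then invoke the strict inequality $\|\varphi\|_\infty < \|\varphi\|_1$ from (i) to pass to the final form. The delicate bookkeeping is getting the constant exactly $2r|w| + r - 2$ rather than a looser bound, so I would track the reductions in the conjugated words explicitly.
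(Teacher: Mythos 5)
Your treatment of (i)--(iii) is correct and is exactly what the paper does (the paper dismisses (i) and (ii) as clear from the definitions, and proves (iii) by the same termwise estimate $|a_i\varphi\theta|\leqslant |a_i\varphi|\cdot\|\theta\|_\infty$ followed by summation). The genuine gap is in (iv), which you acknowledge you have not completed, and your sketch of it contains two problems. First, the composition order: with the right-action convention, $a_i(\lambda_w\varphi)=(a_i\lambda_w)\varphi=(\overline{w^{-1}a_iw})\varphi$, not $w^{-1}(a_i\varphi)w$ as you wrote (the latter is $a_i(\varphi\lambda_w)$). This is not a cosmetic point: it is precisely because the conjugation happens \emph{before} $\varphi$ that each term is bounded by $|\overline{w^{-1}a_iw}|\cdot\|\varphi\|_\infty$, which is why the final bound is a multiple of $\|\varphi\|_\infty$ rather than of $\|\varphi\|_1+2r|w|$.

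Second, the source of the constant $2r|w|+r-2$ is a specific combinatorial fact that neither of your two guesses pins down: since $w\neq 1$, if $\overline{w}$ begins with the letter $a_j^{\pm 1}$, then $w^{-1}a_iw$ fails to be reduced for exactly one index, namely $i=j$ (cancellation occurs at the junction $\overline{w^{-1}}\cdot a_j$ or $a_j\cdot\overline{w}$ according to the sign), and for that index $|\overline{w^{-1}a_jw}|\leqslant 2|w|-1$, while for the other $r-1$ indices the word $w^{-1}a_iw$ is already reduced of length exactly $2|w|+1$. Summing,
\begin{equation*}
\|\lambda_w\varphi\|_1=\sum_{i=1}^r|(\overline{w^{-1}a_iw})\varphi|\leqslant\bigl((r-1)(2|w|+1)+(2|w|-1)\bigr)\|\varphi\|_\infty=(2r|w|+r-2)\|\varphi\|_\infty,
\end{equation*}
and the strict inequality follows from (i) applied to $\varphi$. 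Your first guess (``not all $r$ copies of $w$ can survive simultaneously'') is the right spirit but needs this exact statement; your alternative guess (``bounding $\sum_i|a_i\varphi|$ against $\|\varphi\|_\infty$ using that there are at least two nontrivial images'') is not what happens and would not produce the stated constant.
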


\begin{proof} (i) and (ii) are clear from the definitions.

(iii) For every $a\in A_r$, we have $|a\varphi\theta| \leqslant |a\varphi|\cdot
||\theta||_{\infty}$ and so
 $$
||\varphi\theta||_1 = \sum_{i=1}^r |a_i\varphi\theta| \leqslant \sum_{i=1}^r |a_i\varphi|
\cdot ||\theta||_{\infty} = ||\varphi||_1 \cdot ||\theta||_{\infty} <||\varphi||_1 \cdot
||\theta||_1.
 $$

(iv) Since $w \neq 1$, exactly one of the words $w^{-1} a_i w$ is non reduced, and so

\begin{align*}
||\lambda_w\varphi||_1 & =  \  \sum_{i=1}^r |(\overline{w^{-1} a_iw})\varphi| \leqslant
(r-1)(2|w|+1)||\varphi||_{\infty} + (2|w|-1)||\varphi||_{\infty} \\ {} & = \  (2r|w|+r-2)
||\varphi||_{\infty} < (2r|w|+r-2)||\varphi||_1. \qedhere
\end{align*}
\end{proof}

\begin{lem}\label{outcompo}
Let $\Phi,\, \Theta \in \Out F_r$ and let $\psi_1,\, \psi_2 \in \aut F_r$ be letter
permuting. Then:
\begin{itemize}
\item[(i)] $\| [\psi_1] \Phi [\psi_2] \|_1 =\| \Phi \|_1$,
\item[(ii)] $\| \Phi\Theta \|_1 \leqslant \| \Phi \|_1 \| \Theta \|_1$.
\end{itemize}
\end{lem}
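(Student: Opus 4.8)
The plan is to deduce both parts from the corresponding statements for ordinary automorphisms in Lemma~\ref{compo}, the only genuinely new ingredient being careful bookkeeping of which automorphisms represent a given outer automorphism.

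For part (i) I would fix a representative $\varphi\in\Phi$ and study the map $\varphi'\mapsto\psi_1\varphi'\psi_2$. First it is immediate that this map sends the coset $\Phi$ into $[\psi_1]\Phi[\psi_2]$, since $[\psi_1\varphi'\psi_2]=[\psi_1]\,[\varphi']\,[\psi_2]=[\psi_1]\Phi[\psi_2]$. The crucial step is surjectivity. Because $\Lambda$ is normal, every representative of $[\psi_1]\Phi[\psi_2]$ has the form $\psi_1\varphi\psi_2\lambda_k$ with $k\in F_r$, whereas the image of $\Phi$ consists of the automorphisms $\psi_1\varphi\lambda_g\psi_2$ with $g\in F_r$. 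Using the conjugation identity $\lambda_g\psi_2=\psi_2\lambda_{g\psi_2}$ (valid for any automorphism $\psi_2$, as recorded in the introduction) I can rewrite $\psi_1\varphi\lambda_g\psi_2=\psi_1\varphi\psi_2\lambda_{g\psi_2}$; since $\psi_2$ is a bijection of $F_r$, the element $g\psi_2$ ranges over all of $F_r$ as $g$ does, so the two sets coincide and the map is a bijection from $\Phi$ onto $[\psi_1]\Phi[\psi_2]$. As Lemma~\ref{compo}(ii) gives $\|\psi_1\varphi'\psi_2\|_1=\|\varphi'\|_1$ for each representative, the minima defining the two norms agree, yielding $\|[\psi_1]\Phi[\psi_2]\|_1=\|\Phi\|_1$.

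For part (ii) the plan is far shorter. Since balls are finite, the minima in the definitions of $\|\Phi\|_1$ and $\|\Theta\|_1$ are attained, so I would pick $\varphi\in\Phi$ and $\theta\in\Theta$ realizing them, that is $\|\varphi\|_1=\|\Phi\|_1$ and $\|\theta\|_1=\|\Theta\|_1$. Then $\varphi\theta\in\Phi\Theta$ because $[\varphi\theta]=[\varphi][\theta]=\Phi\Theta$, whence $\|\Phi\Theta\|_1\leqslant\|\varphi\theta\|_1$, and Lemma~\ref{compo}(iii) bounds $\|\varphi\theta\|_1\leqslant\|\varphi\|_1\|\theta\|_1=\|\Phi\|_1\|\Theta\|_1$, which is the desired inequality.

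The main obstacle is the surjectivity argument in part (i): one must verify that letting $\psi_1\varphi'\psi_2$ run over all $\varphi'\in\Phi$ produces exactly the representatives of $[\psi_1]\Phi[\psi_2]$, and not merely a proper subset of them, for otherwise one would only obtain an inequality between the two norms. The conjugation identity together with the bijectivity of $\psi_2$ on $F_r$ is precisely what closes this gap.
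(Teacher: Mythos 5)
Your proof is correct and follows essentially the same route as the paper: both parts reduce to Lemma~\ref{compo}(ii) and (iii), with the coset identification $[\psi_1]\Phi[\psi_2]=\{\psi_1\varphi\psi_2 \mid \varphi\in\Phi\}$ (via normality of $\Lambda$, which you make explicit through the identity $\lambda_g\psi_2=\psi_2\lambda_{g\psi_2}$) doing the work in (i), and the observation that a product of representatives represents the product doing the work in (ii).
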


\begin{proof}
We have $[\psi_1] \Phi [\psi_2] = \psi_1\Lambda_r \Phi \psi_2 \Lambda_r = \psi_1\Lambda_r
\Phi \Lambda_r \psi_2 = \psi_1 \Phi \psi_2$. Now Lemma~\ref{compo}(ii) yields
 $$
\| [\psi_1] \Phi [\psi_2] \|_1 = \min \, \{ \| \psi_1\varphi\psi_2 \|_1 \mid \varphi \in
\Phi \} = \min \, \{ \| \varphi \|_1 \mid \varphi \in \Phi \} =\| \Phi \|_1
 $$
and so (i) holds.

For (ii), we use Lemma~\ref{compo}(iii) to get
\begin{align*}
\| \Phi\Theta \|_1&= \ \min \, \{ \| \psi \|_1 \mid \psi \in \Phi\Theta \} =\min \, \{ \|
\varphi\theta \|_1 \mid \varphi \in \Phi,\; \theta \in \Theta \} \\ & \leqslant \ \min \, \{
\| \varphi\|_1 \|\theta \|_1 \mid \varphi \in \Phi,\; \theta \in \Theta \} \\
&=(\min \, \{ \|
\varphi\|_1 \mid \varphi \in \Phi \})(\min \, \{ \| \theta \|_1 \mid \theta \in \Theta \})
= \ \| \Phi \|_1 \| \Theta \|_1 \,. \qedhere
\end{align*}
\end{proof}

%Now, similarly to Corollary~\ref{cb}, one could prove that the definition of $\beta_r(n)$
%does not depend on the basis chosen to compute norms.

%The proof of the following lemma is immediate:

\begin{lem}
\label{ocr} Let $\varphi \in \aut F_r$ be cyclically reduced. Then  $\| [\varphi] \|_1 = \|
\varphi \|_1$.
\end{lem}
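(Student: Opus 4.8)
Recall we must show that if $\varphi \in \aut F_r$ is cyclically reduced, then $\| [\varphi] \|_1 = \| \varphi \|_1$. One inequality is immediate from the definition of the outer norm: since $\varphi \in [\varphi]$, we have $\| [\varphi] \|_1 = \min \{ \| \theta \|_1 \mid \theta \in [\varphi]\} \leqslant \| \varphi \|_1$. The whole content of the lemma is therefore the reverse inequality $\| \varphi \|_1 \leqslant \| [\varphi] \|_1$, i.e.\ that among all representatives of the outer class, a cyclically reduced $\varphi$ already realizes the minimum norm.

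The plan is to show that conjugating $\varphi$ by any nontrivial element $w$ cannot shorten the total norm, and in fact strictly increases it when $w \neq 1$. The representatives of $[\varphi]$ are exactly the automorphisms $\lambda_w \varphi$ for $w \in F_r$ (equivalently $\varphi \lambda_{w'}$), so it suffices to prove that $\| \lambda_w \varphi \|_1 \geqslant \| \varphi \|_1$ for every $w$. I would argue letterwise: for each generator $a_i$, the image $a_i(\lambda_w\varphi) = \overline{w^{-1}a_iw}\,\varphi = (w\varphi)^{-1}(a_i\varphi)(w\varphi)$, so the $i$-th image of $\lambda_w\varphi$ is the reduction of $(w\varphi)^{-1}(a_i\varphi)(w\varphi)$, a cyclic conjugate of $a_i\varphi$ by the fixed element $v := w\varphi \in F_r$. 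The key observation is that $a_i\varphi$ is cyclically reduced by hypothesis, so conjugating it by any $v$ can only keep its length the same or increase it: $|\,\overline{v^{-1}(a_i\varphi)v}\,| \geqslant |a_i\varphi|$, with equality exactly when $v$ commutes with $a_i\varphi$ up to the obvious cancellations (at most the cyclic-word length is preserved). Summing over $i$ yields $\| \lambda_w\varphi \|_1 = \sum_i |\,\overline{v^{-1}(a_i\varphi)v}\,| \geqslant \sum_i |a_i\varphi| = \| \varphi \|_1$.

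More carefully, the standard fact I would invoke is that for a cyclically reduced word $u$ and any $v \in F_r$, one has $|\,\overline{v^{-1}uv}\,| \geqslant |u|$; this is because conjugating a cyclically reduced word can never decrease its length (the length of a conjugacy class is minimized precisely on cyclically reduced representatives). Applying this with $u = a_i\varphi$ (cyclically reduced since $\varphi$ is cyclically reduced) and $v = w\varphi$ gives the per-coordinate bound, and summation finishes the reverse inequality. Combined with the trivial direction, this yields $\| \varphi \|_1 = \| [\varphi] \|_1$.

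The main obstacle is purely bookkeeping: making sure that every representative of the outer automorphism $[\varphi]$ is indeed of the form $\lambda_w\varphi$ and correctly identifying the conjugating element as $v = w\varphi$ rather than $w$ itself (this uses $\lambda_w\varphi = \varphi\lambda_{w\varphi}$, the relation noted in the introduction). Once that identification is in place, the length-nondecrease under conjugation of a cyclically reduced word is the only nontrivial input, and it is a well-known elementary property of free groups requiring no delicate estimate.
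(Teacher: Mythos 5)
Your proof is correct. The paper states this lemma without proof, evidently regarding it as standard; your argument --- writing every representative of $[\varphi]$ as $\lambda_w\varphi$, computing $a_i(\lambda_w\varphi)=\overline{(w\varphi)^{-1}(a_i\varphi)(w\varphi)}$, and invoking the elementary fact that a cyclically reduced word is a shortest element of its conjugacy class to get $|a_i(\lambda_w\varphi)|\geqslant |a_i\varphi|$ for each $i$ --- is exactly the intended one and supplies the omitted details cleanly.
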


\subsection{Abelianization}

Abelianization will be a valuable tool to derive lower bounds for $\| \varphi \|_1$ and $\|
\Phi\|_1$.

The 1-norm for vectors $\|(x_1,\ldots ,x_r)\|_1=|x_1|+\cdots +|x_r|$ gives rise to the
1-norm for matrices, namely
 $$
\|M\|_1 =\sum_{i,j} |m_{i,j}|,
 $$
where $M=(m_{i,j})\in \GL_r(\mathbb{Z})$. It is straightforward to verify that, for all
$\textbf{x},\, \textbf{y}\in \mathbb{Z}^r$ and $M,N\in \GL_r(\mathbb{Z})$, we have the
inequalities $\| \textbf{x}+\textbf{y}\|_1 \leqslant \| \textbf{x}\|_1 +\| \textbf{y}\|_1$,
$\| \textbf{x}M\|_1 \leqslant \| \textbf{x}\|_1 \cdot \| M\|_1$, $\| M+N\|_1 \leqslant \|
M\|_1 +\| N\|_1$, and $\| MN\|_1 \leqslant \| M\|_1 \| N\|_1$.

Let us denote the abelianization map by $({\cdot})^{\rm ab}\colon F_r \twoheadrightarrow
\mathbb{Z}^r$, $w\mapsto w^{\rm ab}=([w]_{a_1},\, \ldots ,\, [w]_{a_r})$. Here, $[w]_{a_i}$
is the total exponent of $a_i$ in $w$, i.e. the total number of times the letter $a_i$
occurs in $\overline{w}$, taking into account the exponents' signs (for example,
$[a_1a_2a_1^{-2}]_{a_1}=-1$ and $[a_1a_1^{-1}a_2]_{a_1}=[a_2]_{a_1}=0$).

Every automorphism $\varphi \in \aut F_r$ abelianizes to an automorphism $\varphi^{\rm
ab}$ of $\mathbb{Z}^r$ which we shall represent by its $r\times r$ (invertible) matrix
over $\mathbb{Z}$. We want automorphisms to act on the right, and so we write matrices
by rows i.e., with the $i$-th row describing the image of the $i$-th generator:
 $$
\varphi^{\rm ab} = \begin{pmatrix} [a_1 \varphi ]_{a_1} & \cdots & [a_1 \varphi ]_{a_r} \\
\cdots & \cdots & \cdots \\ \,[a_r \varphi ]_{a_1} & \cdots & [a_r \varphi ]_{a_r}
\end{pmatrix} \in \GL_r(\mathbb{Z}).
 $$
This way, for every $w\in F_r$, $(w\varphi )^{\rm ab} =w^{\rm ab}\varphi^{\rm ab}$.
Furthermore, $(\varphi \theta)^{\rm ab}=\varphi^{\rm ab} \theta^{\rm ab}$, and
$(\varphi^{-1})^{\rm ab}=(\varphi^{\rm ab})^{-1}$.

Observe that, for every $w\in F_r$, $|w| \geqslant \| w^{\rm ab}\|_1 =|[w]_{a_1}|+\cdots
+|[w]_{a_r}|$ with equality if and only if no letter occurs in $\overline{w}$ with the two
opposite signs. This can be expressed in the following useful way:

\begin{lem}\label{ab}
For every $\varphi \in \aut F_r$, $\| \varphi\|_1 \geqslant \| [\varphi] \|_1 \geqslant \|
\varphi^{\rm ab}\|_1$, with equalities if and only if, for every $i=1,\ldots ,r$, no letter
occurs in $\overline{a_i\varphi}$ with the two opposite signs. In particular, $\| \varphi
\|_1 =\| \varphi^{\rm ab} \|_1$ for positive automorphisms.
\end{lem}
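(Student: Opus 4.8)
The plan is to obtain both inequalities almost immediately and then to read off the equality condition from a termwise comparison. The left inequality $\|\varphi\|_1 \geqslant \|[\varphi]\|_1$ is nothing but the definition of the outer norm: since $\varphi$ itself lies in its coset $[\varphi]$ and $\|[\varphi]\|_1$ is the minimum of $\|\theta\|_1$ over all $\theta \in [\varphi]$, the inequality holds at once.

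For the right inequality I would exploit that abelianization annihilates inner automorphisms. Every $\theta \in [\varphi]$ has the form $\theta = \varphi\lambda_g$, and since $x\lambda_g = g^{-1}xg$ abelianizes to $x^{\rm ab}$, we get $\lambda_g^{\rm ab} = \mathrm{id}$ and hence $\theta^{\rm ab} = \varphi^{\rm ab}\lambda_g^{\rm ab} = \varphi^{\rm ab}$ for every $\theta \in [\varphi]$. Applying the observation that precedes the statement to each word $a_i\theta$ and summing over $i$,
\[
\|\theta\|_1 = \sum_{i=1}^r |a_i\theta| \geqslant \sum_{i=1}^r \|(a_i\theta)^{\rm ab}\|_1 = \|\theta^{\rm ab}\|_1 = \|\varphi^{\rm ab}\|_1,
\]
where the third equality merely rewrites $\|M\|_1 = \sum_{i,j}|m_{i,j}|$ for $M = \theta^{\rm ab}$. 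Taking the minimum over $\theta \in [\varphi]$ then yields $\|[\varphi]\|_1 \geqslant \|\varphi^{\rm ab}\|_1$.

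It remains to characterise equality. Since the two outer terms of $\|\varphi\|_1 \geqslant \|[\varphi]\|_1 \geqslant \|\varphi^{\rm ab}\|_1$ coincide exactly when $\|\varphi\|_1 = \|\varphi^{\rm ab}\|_1$ (forcing the squeezed middle term to agree as well), it suffices to analyse
\[
\|\varphi\|_1 - \|\varphi^{\rm ab}\|_1 = \sum_{i=1}^r \big( |a_i\varphi| - \|(a_i\varphi)^{\rm ab}\|_1 \big),
\]
a sum of nonnegative terms. This difference vanishes if and only if each summand does, i.e. if and only if $|a_i\varphi| = \|(a_i\varphi)^{\rm ab}\|_1$ for every $i$; by the equality clause of the observation this is precisely the condition that no letter occurs in $\overline{a_i\varphi}$ with the two opposite signs, for each $i$. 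The final assertion is then immediate: a positive automorphism sends each $a_i$ to a positive (hence reduced) word in which no letter appears with a negative exponent, so the condition holds and $\|\varphi\|_1 = \|\varphi^{\rm ab}\|_1$.

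I do not expect a genuine obstacle, as the whole argument rests on the displayed observation preceding the lemma together with the vanishing of inner automorphisms under abelianization. The only point demanding a little care is the equality bookkeeping: noticing that equality throughout the three-term chain is equivalent to equality of its two extremes, and then passing from equality of the two sums to equality of the individual summands, so that the per-word criterion of the observation can be invoked.
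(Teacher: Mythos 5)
Your proposal is correct and follows essentially the same route as the paper: the left inequality from the definition of the outer norm, the right inequality by combining the per-word bound $|w|\geqslant \|w^{\rm ab}\|_1$ with the fact that conjugation is invisible to abelianization, and the equality clause by collapsing the chain to its extremes and passing to the individual summands. The only cosmetic difference is that the paper works with the specific representative $\varphi\lambda_w$ realizing $\|[\varphi]\|_1$ rather than minimizing over all $\theta\in[\varphi]$, which amounts to the same thing.
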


\begin{proof}
Clearly, $\| \varphi\|_1 \geqslant \| [\varphi] \|_1$. We may write $\| [\varphi] \|_1 = \|
\varphi\lambda_w \|_1$ for some $w \in F_r$. Then
\begin{align*}
 \| \varphi\|_1&\geqslant\| [\varphi] \|_1 =\| \varphi\lambda_w \|_1
=\sum_{i=1}^r |a_i\varphi\lambda_w| \geqslant \sum_{i=1}^r \| (a_i\varphi )^{\rm ab}\|_1 \\
&=\sum_{i=1}^r \| a_i^{\rm ab} \varphi^{\rm ab}\|_1 =\sum_{i=1}^r \sum_{j=1}^r |[a_i
\varphi ]_{a_j}| =\| \varphi^{\rm ab}\|_1,
\end{align*}
where $a_i^{\rm ab}$ is the $i$-th canonical vector and so, $a_i^{\rm ab} \varphi^{\rm ab}$
is the $i$-th row in $\varphi^{\rm ab}$. It is immediate that the inequality $\|
\varphi\|_1 \geqslant \| \varphi^{\rm ab}\|_1$ becomes an equality if and only if, for
every $i=1,\ldots ,r$, no letter occurs in $\overline{a_i\varphi}$ with the two opposite
signs. This is the case when $\varphi \in \aut^+ F_r$.
\end{proof}

\section{The rank two case}

In this section we shall deal with the rank 2 case. For the duration of this section, we
simplify our notation to $A=A_2 =\{ a,b,a^{-1}, b^{-1}\}$.

We start by proving that inversion preserves the norm in the case of positive
automorphisms. It is known that positive automorphisms of $F_2$ are generated as a
monoid by $\Delta =\{ \eta_{b,a},\, \eta_{a,ab},\, \eta_{a,ba} \}$, that is, they all
can be obtained as a composition of these elementary ones i.e., $\aut^+ F_2=\Delta^*$
(see~\cite{WenWen}).

\begin{lem}\label{inv+}
Let $\varphi \in \aut^+ F_2$ and write $\varphi^{-1}=\eta_{u,v}$. Then either $u\in \{
a,b^{-1}\}^*$ and $v\in \{ a^{-1},b\}^*$, or $u\in \{ a^{-1},b\}^*$ and $v\in \{
a,b^{-1}\}^*$. In particular, $\varphi^{-1}$ is cyclically reduced.
\end{lem}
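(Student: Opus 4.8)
The plan is to exploit the fact, recalled just above, that $\aut^+ F_2 = \Delta^*$ with $\Delta = \{\eta_{b,a},\,\eta_{a,ab},\,\eta_{a,ba}\}$, and to run an induction on the number of factors. First I would record the inverses of the three generators by a one-line check each: $\eta_{b,a}^{-1}=\eta_{b,a}$, $\eta_{a,ab}^{-1}=\eta_{a,a^{-1}b}$ and $\eta_{a,ba}^{-1}=\eta_{a,ba^{-1}}$. Writing $\varphi=\delta_1\cdots\delta_k$ with each $\delta_i\in\Delta$, we get $\varphi^{-1}=\delta_k^{-1}\cdots\delta_1^{-1}$, a product of elements of $\Delta^{-1}=\{\eta_{b,a},\,\eta_{a,a^{-1}b},\,\eta_{a,ba^{-1}}\}$. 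Thus it suffices to prove that every element of the monoid $(\Delta^{-1})^*$ has the stated shape.

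Call $\eta_{u,v}$ of \emph{type I} if $u\in\{a,b^{-1}\}^*$ and $v\in\{a^{-1},b\}^*$, and of \emph{type II} if $u\in\{a^{-1},b\}^*$ and $v\in\{a,b^{-1}\}^*$; the claim is that each element of $(\Delta^{-1})^*$ is of type I or II. I would argue by induction on $k$, the identity (empty product) being type I. For the inductive step I would use that right-multiplication acts by substitution, $\eta_{u,v}\cdot\sigma=\eta_{u\sigma,\,v\sigma}$, together with the key observation: each $\sigma\in\Delta^{-1}$ either sends both submonoids $\{a,b^{-1}\}^*$ and $\{a^{-1},b\}^*$ into themselves, or swaps them. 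This is read off by evaluating $\sigma$ on the four letters: for $\sigma=\eta_{a,a^{-1}b}$ one finds $a\sigma=a$, $b^{-1}\sigma=b^{-1}a$, $a^{-1}\sigma=a^{-1}$, $b\sigma=a^{-1}b$, so $\{a,b^{-1}\}^*$ and $\{a^{-1},b\}^*$ are each preserved; similarly for $\eta_{a,ba^{-1}}$; while $\eta_{b,a}$ interchanges the two sets. Hence right-multiplication by any $\sigma\in\Delta^{-1}$ keeps the type unchanged (first two generators) or toggles it (the swap), in either case staying within types I and II. This closes the induction and shows $\varphi^{-1}$ is of type I or II.

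The one point requiring care is that no free cancellation occurs in the substitution $u\mapsto u\sigma$, so that the letters appearing in the reduced form of $u\sigma$ really remain confined to the claimed two-letter set; this is the crux, and I expect it to be the only genuine obstacle. It dissolves once phrased correctly: since $a$ and $b^{-1}$ are not mutually inverse, every element of $\{a,b^{-1}\}^*$ is already a reduced word, and replacing each of $a,b^{-1}$ by a positive word in $a,b^{-1}$ produces again a positive, hence reduced, word in $a,b^{-1}$; the same applies to $\{a^{-1},b\}^*$. In other words, a positive word in two non-inverse letters never reduces, which is exactly what makes the substitutions stay inside the relevant submonoids.

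Finally I would dispatch the ``in particular'' clause directly from the established form. A nonempty word in $\{a,b^{-1}\}^*$ is reduced, and its first and last letters both lie in $\{a,b^{-1}\}$, a set containing no mutually inverse pair, so the word is cyclically reduced; the same holds for $\{a^{-1},b\}^*$. Therefore both $u$ and $v$ are cyclically reduced, and by definition $\varphi^{-1}=\eta_{u,v}$ is a cyclically reduced automorphism.
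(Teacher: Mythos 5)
Your proof is correct and follows essentially the same route as the paper's: both reduce to the monoid generators $\Delta$, induct on the length of the factorization, and rest on the observation that words over two non-mutually-inverse letters never admit cancellation. The only difference is cosmetic: you peel off the first factor of $\varphi$, so your inductive step is postcomposition $\eta_{u,v}\mapsto\eta_{u\sigma,v\sigma}$ by $\sigma\in\Delta^{-1}$ (a letter substitution preserving or swapping the two submonoids), whereas the paper peels off the last factor, so its step is precomposition producing $\eta_{v,u}$, $\eta_{u,u^{-1}v}$ or $\eta_{u,vu^{-1}}$ (concatenation within the submonoids); both closure properties hold for the same underlying reason.
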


\begin{proof}
The result is clear for the three elementary positive automorphisms, $\eta_{b,a}^{-1}
=\eta_{b,a}$, $\eta_{a,ab}^{-1} =\eta_{a,a^{-1}b}$, $\eta_{a,ba}^{-1} =\eta_{a,ba^{-1}}$.
Since all positive automorphisms are compositions of elements from $\Delta$, it is
sufficient to show that, given a positive automorphism $\varphi$ and $\theta \in \Delta$,
the lemma holds for $\varphi \theta$ whenever it holds for $\varphi$. To see this, write
$\varphi^{-1}=\eta_{u,v}$ and assume $u$ and $v$ are as in the statement. Then we get
\begin{align*}
(\varphi \eta_{b,a})^{-1} & =\eta_{b,a} \eta_{u,v}=\eta_{v,u}, \\
(\varphi \eta_{a,ab})^{-1} & =\eta_{a,a^{-1}b} \eta_{u,v}=\eta_{u,u^{-1}v}, \\
(\varphi \eta_{a,ba})^{-1} &  =\eta_{a,ba^{-1}} \eta_{u,v}=\eta_{u,vu^{-1}},
\end{align*}
completing the proof.
\end{proof}

\begin{prop}\label{pos}
Let $\varphi \in \aut^+ F_2$. Then $\| \varphi^{-1}\|_1=\| \varphi \|_1$.
\end{prop}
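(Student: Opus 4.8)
The plan is to prove that $\|\varphi^{-1}\|_1 = \|\varphi\|_1$ for every positive automorphism $\varphi \in \aut^+ F_2$ by combining the structural result of Lemma~\ref{inv+} with the abelianization lower bound of Lemma~\ref{ab}. The key observation is that Lemma~\ref{inv+} tells us $\varphi^{-1} = \eta_{u,v}$ where $u$ and $v$ are words whose letters all carry a fixed sign pattern: in each of $u$ and $v$, no generator $a$ or $b$ appears with both signs. Indeed, if (say) $u \in \{a, b^{-1}\}^*$, then $u$ is already reduced and contains only positive occurrences of $a$ and only negative occurrences of $b$; symmetrically for $v \in \{a^{-1}, b\}^*$. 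Thus $\varphi^{-1}$ satisfies exactly the hypothesis under which the inequality $\|\varphi^{-1}\|_1 \geqslant \|(\varphi^{-1})^{\mathrm{ab}}\|_1$ of Lemma~\ref{ab} becomes an \emph{equality}.

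The chain of reasoning I would carry out is the following. First, from Lemma~\ref{inv+}, observe that the equality clause of Lemma~\ref{ab} applies to $\varphi^{-1}$, giving
\begin{equation*}
\|\varphi^{-1}\|_1 = \|(\varphi^{-1})^{\mathrm{ab}}\|_1 = \|(\varphi^{\mathrm{ab}})^{-1}\|_1,
\end{equation*}
where the last equality uses the functoriality $(\varphi^{-1})^{\mathrm{ab}} = (\varphi^{\mathrm{ab}})^{-1}$ recorded in the Abelianization subsection. Second, since $\varphi$ is a positive automorphism, Lemma~\ref{ab} also gives $\|\varphi\|_1 = \|\varphi^{\mathrm{ab}}\|_1$. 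So the problem reduces to a purely matrix-theoretic claim: for the $2\times 2$ integer matrix $M = \varphi^{\mathrm{ab}} \in \GL_2(\mathbb{Z})$ arising from a positive automorphism, one has $\|M^{-1}\|_1 = \|M\|_1$.

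The main work, and what I expect to be the central obstacle, is verifying this matrix identity $\|M^{-1}\|_1 = \|M\|_1$. For $2\times 2$ matrices the inverse has a clean form: if $M = \begin{pmatrix} p & q \\ s & t \end{pmatrix}$ with $\det M = \pm 1$, then $M^{-1} = \pm\begin{pmatrix} t & -q \\ -s & p \end{pmatrix}$, so $M^{-1}$ has exactly the same four entries as $M$ up to sign and permutation. Hence $\|M^{-1}\|_1 = |t|+|q|+|s|+|p| = \|M\|_1$ \emph{automatically}, for \emph{any} $M \in \GL_2(\mathbb{Z})$ — no positivity is needed at the matrix level. This is precisely the rank-two miracle that fails in higher rank. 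I would state this as the crux and give the one-line computation. One point requiring care: Lemma~\ref{ab} as stated gives the equality $\|\varphi\|_1 = \|\varphi^{\mathrm{ab}}\|_1$ for positive automorphisms, but I must confirm the same equality clause genuinely applies to $\varphi^{-1}$, which need not be positive. This is exactly why Lemma~\ref{inv+} is needed: it guarantees that although $\varphi^{-1}$ may involve inverse letters, within each image word $u$ and $v$ no letter occurs with both signs, which is the true hypothesis of the equality case of Lemma~\ref{ab}. Assembling these pieces yields $\|\varphi^{-1}\|_1 = \|M^{-1}\|_1 = \|M\|_1 = \|\varphi\|_1$, completing the proof.
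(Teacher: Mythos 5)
Your proof is correct and follows essentially the same route as the paper's: both use Lemma~\ref{inv+} to invoke the equality case of Lemma~\ref{ab} for $\varphi^{-1}$, the positivity of $\varphi$ to get $\|\varphi\|_1 = \|\varphi^{\mathrm{ab}}\|_1$, and the $2\times 2$ adjugate formula to see that $\|M^{-1}\|_1 = \|M\|_1$ for $M \in \GL_2(\mathbb{Z})$. The only difference is presentational (the paper starts from the matrix computation and you start from the equality clause), and your identification of the adjugate identity as the ``rank-two miracle'' is exactly the right emphasis.
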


\begin{proof}
 Abelianizing, we have
 $$
\varphi^{\rm ab}= \left( \begin{array}{cc} [a\varphi ]_a & [a\varphi ]_b \\ \,[b\varphi ]_a
& [b\varphi ]_b \end{array} \right) \quad \text{ and } \quad (\varphi^{-1})^{\rm ab}=\pm
\left( \begin{array}{rr} [b\varphi ]_b & -[a\varphi ]_b \\ -[b\varphi ]_a & [a\varphi ]_a
\end{array}\right);
 $$
hence, $\| (\varphi^{-1})^{\rm ab}\|_1 =\| \varphi^{\rm ab}\|_1$. Also, $\| \varphi^{\rm
ab}\|_1 =\| \varphi \|_1$ since $\varphi$ is positive (see Lemma~\ref{ab}). Now, write
$\varphi^{-1}=\eta_{u,v}$. By Lemma~\ref{inv+} no letter occurs with both signs in neither
$u$ nor $v$ so, again by Lemma~\ref{ab}, $\| (\varphi^{-1})^{\rm ab}\|_1 =\| \varphi^{-1}
\|_1$, concluding the proof.
\end{proof}

From positive automorphisms we can gain control of all cyclically reduced ones.

\begin{lem}\label{+}
For every cyclically reduced $\varphi \in \aut F_2$, there exist two letter permuting
automorphisms $\psi_1,\, \psi_2 \in \aut F_2$ and $\theta \in \aut^+ F_2$ such that
$\varphi =\psi_1 \theta \psi_2$.
\end{lem}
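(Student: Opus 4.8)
The plan is to reduce everything to the sign structure of the two images $u_1=a\varphi$ and $u_2=b\varphi$, both cyclically reduced by hypothesis. The key observation is how letter permutations act: left-multiplying $\varphi$ by a letter-permuting automorphism reorders the pair $\{u_1,u_2\}$ and inverts each $u_i$ independently, while right-multiplying substitutes a signed permutation of $\{a,b\}$ simultaneously inside both words. Producing a positive $\theta$ with $\varphi=\psi_1\theta\psi_2$ therefore amounts to choosing, for each image, a global sign $s_i\in\{\pm1\}$ (on the left) together with signs $t_a,t_b\in\{\pm1\}$ for the two letters (on the right, common to both images), so that each $u_i^{s_i}$ becomes positive after the substitution $a\mapsto a^{t_a}$, $b\mapsto b^{t_b}$. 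This is solvable precisely when each $u_i$ uses $a$ with a single sign $\sigma_i^a$ and $b$ with a single sign $\sigma_i^b$, subject to $t_a=s_i\sigma_i^a$ and $t_b=s_i\sigma_i^b$ being consistent for $i=1,2$, i.e. $\sigma_1^a\sigma_2^a=\sigma_1^b\sigma_2^b$.

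Granting the single-sign property, I would observe that this consistency is automatic from $\det\varphi^{\rm ab}=\pm1$. Indeed, by Lemma~\ref{ab} single signs give $\varphi^{\rm ab}=\left(\begin{smallmatrix}\sigma_1^a|p| & \sigma_1^b|q|\\ \sigma_2^a|r| & \sigma_2^b|s|\end{smallmatrix}\right)$, whose determinant is $\sigma_1^a\sigma_2^b|p||s|-\sigma_1^b\sigma_2^a|q||r|$. If all four entries are nonzero and $\sigma_1^a\sigma_2^b\neq\sigma_1^b\sigma_2^a$, the two products would add rather than cancel, forcing $|\det|\geq 2$; hence $\sigma_1^a\sigma_2^b=\sigma_1^b\sigma_2^a$, which is exactly $(\sigma_1^a\sigma_1^b)(\sigma_2^a\sigma_2^b)=1$, the consistency condition above (zero entries only relax it). Setting $s_1=+1$, $s_2=\sigma_1^a\sigma_2^a$, $t_a=\sigma_1^a$, $t_b=\sigma_1^b$ then determines letter-permuting $\psi_1,\psi_2$ for which $\theta=\psi_1^{-1}\varphi\psi_2^{-1}$ is positive, giving the claimed $\varphi=\psi_1\theta\psi_2$.

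The main obstacle is thus the key claim: a cyclically reduced automorphism of $F_2$ has each generator occurring with a single sign in each image. I would attempt this by induction on $\|\varphi\|_1=|u_1|+|u_2|$ via Nielsen reduction. After a letter permutation assume $|u_1|\geq|u_2|$ with $|u_1|\geq 2$; since $\{u_1,u_2\}$ is a basis, one of the words $u_1u_2^{\pm1}$ or $u_2^{\pm1}u_1$ (obtained by left-multiplying $\varphi$ by an elementary automorphism such as $\eta_{ab,b}$ and its variants) is strictly shorter than $u_1$, the cancellation never exceeding half of the shorter element. One then applies the inductive hypothesis to the shortened basis and transports the single-sign property back. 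The delicate points -- which is where the real work lies -- are that the reduced pair may fail to be cyclically reduced and must be re-reduced, and that one must verify no generator acquires a second sign when the cancelled syllables are reintroduced; both must be controlled using the bounded-cancellation of a Nielsen-reduced pair. Alternatively, and perhaps more cleanly, the single-sign property follows from the classification of cyclically reduced primitive elements of $F_2$ as Christoffel/Sturmian words provided by the theory of invertible substitutions in~\cite{WenWen}, which is already invoked elsewhere in this section.

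Finally, once the single-sign property is in hand, the explicit construction of the second paragraph produces $\psi_1,\psi_2$ and the positive $\theta$ directly, completing the proof.
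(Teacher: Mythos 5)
Your proposal is correct and follows essentially the same route as the paper: the whole content is the single-sign (at most two letters of $A$ per image) property of cyclically reduced bases of $F_2$, which the paper imports from Cohen--Metzler--Zimmermann \cite{CMZ} rather than from the invertible-substitution/Christoffel literature you mention, and your sign bookkeeping via $\det\varphi^{\rm ab}=\pm1$ is exactly the paper's determinant contradiction. Be aware that your self-contained Nielsen-reduction induction is left with the hard steps unverified (as you acknowledge), so the argument is only complete via the citation route --- which is precisely what the paper does.
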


\begin{proof}
 Write $\varphi =\eta_{u,v}$. Since both $u$ and $v$ are cyclically reduced, the main
result in~\cite{CMZ} tells us that at most two letters of $A$ occur in $u$, and at most two
of them (not necessarily the same ones) occur in $v$. Without loss of generality, we may
assume that two different letters occur in either $u$ or $v$, say in $u$. Inverting all
possibly negative letters in $u$, we can write $\eta_{u,v}= \eta_{u',v'}\eta_{a^{\epsilon},
b^{\delta}}$ with $\epsilon,\, \delta =\pm 1$, $u'\in \{ a,b\}^*$ and $|u'|=|u|$ and
$|v'|=|v|$.

If $v' \in \{ a,\,b \}^*$ i.e., it is a positive word, then $\eta_{u',v'}\in \aut^+
F_2$ and we are done. If $v'\in \{ a^{-1},\, b^{-1}\}^*$, take $\eta_{u,v}=\eta_{a,
b^{-1}}\eta_{u', v'^{-1}} \eta_{a^{\epsilon},b^{\delta}}$ and we are also done. The
remaining cases to consider are $v' \in \{ a^{-1},\, b\}^*$ or $v'\in \{ a,\,
b^{-1}\}^*$ with exactly two letters occurring in $v'$; they will lead us to
contradiction. Indeed, abelianizing, we get $u'^{\rm ab}=([u]_a,\, [u]_b)=(p,q)$ with
$p,q>0$, and $v'^{\rm ab}=([v]_a,\, [v]_b)=(r,s)$ with $rs<0$. This contradicts
$ps-qr=\pm 1$ coming from the fact that $\eta_{u',v'}$ is an automorphism of $F_2$.
\end{proof}

And from those, we can reach the general case:

\begin{lem}\label{+ncr}
For every $\varphi \in \aut F_2$, there exist two letter permuting automorphisms $\psi_1,\,
\psi_2 \in \aut F_2$, $\theta \in \aut^+ F_2$, and an element $g\in F_2$ such that $\varphi
=\psi_1 \theta \psi_2 \lambda_g$ and $\| \theta \|_1 +2|g|\leqslant \| \varphi \|_1$.
\end{lem}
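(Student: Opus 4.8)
The goal is to reduce an arbitrary $\varphi \in \aut F_2$ to a cyclically reduced automorphism by conjugating away the common ``tails'' of the images $a\varphi$ and $b\varphi$, and then apply Lemma~\ref{+} to the cyclically reduced piece. The element $g$ will be precisely the conjugator that makes the images cyclically reduced, and the inequality $\|\theta\|_1 + 2|g| \leqslant \|\varphi\|_1$ will come from the fact that each letter of $g$ contributes twice (once at the front as $g^{-1}$, once at the back as $g$) to the lengths of $a\varphi$ and $b\varphi$, so shortening those images by cyclic reduction genuinely saves $2|g|$ worth of length.

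\textbf{First steps.} I would start by writing $\varphi = \eta_{u,v}$ with $u = a\varphi$ and $v = b\varphi$ reduced words. The first task is to find $g \in F_2$ such that $\varphi \lambda_g^{-1} = \varphi \lambda_{g^{-1}}$ has cyclically reduced images; concretely, if $u$ is not cyclically reduced, it can be written $u = g^{-1} u_0 g$ with $u_0$ cyclically reduced (and similarly one hopes $v = g^{-1} v_0 g$ with the \emph{same} $g$). Conjugation sends $\eta_{u,v}$ to $\eta_{g u g^{-1},\, g v g^{-1}}$, so I want to choose $g$ to maximally cyclically reduce. The key observation is that for an \emph{automorphism} of $F_2$, the reduced images $u$ and $v$ share a common prefix and a common suffix in a controlled way, so there is a single conjugating element $g$ that cyclically reduces both simultaneously; this should follow from the structure of $\aut F_2$ (or be argued directly using that an automorphism cannot have ``conflicting'' cancellation). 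Once $\varphi \lambda_{g^{-1}}$ is cyclically reduced, write $\varphi \lambda_{g^{-1}} = \eta_{u_0, v_0}$, apply Lemma~\ref{+} to obtain $\varphi \lambda_{g^{-1}} = \psi_1 \theta \psi_2$ with $\psi_1, \psi_2$ letter permuting and $\theta \in \aut^+ F_2$, and then $\varphi = \psi_1 \theta \psi_2 \lambda_g$.

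\textbf{The length inequality.} With the decomposition in hand, I would verify $\|\theta\|_1 + 2|g| \leqslant \|\varphi\|_1$. Since $\psi_1, \psi_2$ are letter permuting, Lemma~\ref{compo}(ii) gives $\|\theta\|_1 = \|\psi_1 \theta \psi_2\|_1 = \|\varphi \lambda_{g^{-1}}\|_1 = |u_0| + |v_0|$. On the other hand $\|\varphi\|_1 = |u| + |v|$. Because $u = g^{-1} u_0 g$ reduces without cancellation between the conjugating parts and the cyclically reduced core (that is precisely what it means for $u_0$ to be the cyclic reduction obtained by stripping $g$), we get $|u| = |u_0| + 2|g|$ whenever $u$ genuinely has $g$ as its reducing tail, and $|u| \geqslant |u_0|$ in general; the crucial point is that \emph{at least one} of $u, v$ actually carries the full conjugator, giving $|u| + |v| \geqslant |u_0| + |v_0| + 2|g|$. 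Summing yields the claim.

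\textbf{Main obstacle.} The delicate point is justifying that a \emph{single} $g$ simultaneously cyclically reduces both $u$ and $v$, and that stripping it off produces exactly the saving $2|g|$ rather than less. For a general pair of reduced words this fails, but for the images of an automorphism the two images are tightly linked (they generate $F_2$, so their cancellation patterns cannot be independent). I expect the cleanest route is to take $g$ to be the longest common ``conjugating tail'' forced by cyclic reduction and argue that any discrepancy between the front-cancellation of $u$ and that of $v$ would contradict $\eta_{u,v}$ being an automorphism (via the abelianization determinant condition $\pm 1$, much as in the proof of Lemma~\ref{+}). Handling the edge cases where one of the images is already cyclically reduced, or where the two images require different conjugators, is where the careful bookkeeping lives, but the abelianization constraint should rule out the genuinely problematic configurations.
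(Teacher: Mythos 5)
Your plan is the same as the paper's at the top level (peel off an inner automorphism $\lambda_g$ to reach a cyclically reduced automorphism, apply Lemma~\ref{+} to that, and do the length bookkeeping), but the two claims you flag as the ``main obstacle'' are precisely the content of the lemma, and your proposal does not actually prove either of them. Claim (a): that a \emph{single} $g$ simultaneously cyclically reduces both $a\varphi$ and $b\varphi$. Claim (b): that $|a\varphi|+|b\varphi| \geqslant |a\theta'|+|b\theta'| + 2|g|$ for the resulting cyclically reduced $\theta'$. For (a), the tool you propose --- the determinant $\pm 1$ condition on the abelianization --- is not the right one: that is what drives Lemma~\ref{+}, but it says nothing about cancellation patterns at the ends of the words (the abelianization is blind to conjugation). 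The argument that actually works, and that the paper uses, is a generation argument: if $\overline{a\varphi}=c^{-1}u'c$ for a letter $c$ and $\overline{b\varphi}$ neither begins with $c^{-1}$ nor ends with $c$, then $c\notin\langle a\varphi, b\varphi\rangle$, contradicting surjectivity of $\varphi$. The paper then strips one letter at a time by induction on $\|\varphi\|_1$: at each step the word being stripped loses exactly $2$ in length while the other image does not grow (it is conjugated by a letter it already absorbs at one end), so both (a) and (b) fall out of the induction with no further work.

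Concerning (b) as you state it: the assertion that ``at least one of $u,v$ carries the full conjugator'' is false for an arbitrary choice of $g$ (take $\varphi=\eta_{a,ba^{-1}}\lambda_{ab}=\eta_{b^{-1}ab,\;b^{-1}a^{-1}b^2}$: with $g=ab$ both images cancel partially against $g$), so it can only be salvaged by a minimality argument you do not give: choose $g$ of minimal length among all decompositions $\varphi=\theta'\lambda_g$ with $\theta'$ cyclically reduced; if both images interacted with the first letter $g_1$ of $g$, then conjugating by $g_1$ would merely cyclically permute both (cyclically reduced) images, producing a shorter conjugator --- hence one image is untouched by $g_1$ and therefore carries the full $2|g|$, while the other cannot drop below its cyclically reduced length. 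That closes (b), but only after (a) has been established, and (a) is exactly the step your writeup defers to ``the structure of $\aut F_2$.'' As written, the proof has a genuine gap; the inductive single-letter peeling in the paper is both the missing existence argument and a cleaner way to get the inequality.
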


\begin{proof}
Note that, by Lemmas~\ref{compo}(ii) and \ref{+}, it suffices to show that there exists a
cyclically reduced $\varphi'\in \aut F_2$ and $g\in F_2$, such that $\varphi =\varphi'
\lambda_g$ and $\| \varphi'\|_1 +2|g| \leqslant \| \varphi \|_1$. Let us prove this claim
by induction on $\| \varphi \|_1$.

If $\| \varphi \|_1 =2$ the claim is trivial since $\varphi$ is already cyclically reduced.
So, suppose $\varphi =\eta_{u,v} \in \aut F_2$ is given with $\| \eta_{u,v} \|_1 >2$, and
let us assume the claim holds for all automorphisms of smaller 1-norm. Again, if $u$ and
$v$ are cyclically reduced the claim is trivial so, by symmetry, we can assume that $u$ is
not cyclically reduced, say $\overline{u}=c^{-1}u'c$ for some $c\in A$ and $u'\in F_2$. If
$\overline{v}$ neither begins with $c^{-1}$ nor ends with $c$ then it could be easily seen
that $c$ would not be contained in $\langle u,v\rangle$ contradicting the fact that $\{ u,
v\}$ generates $F_2$. Hence, $v\in c^{-1}A^* \cup A^*c$, and so $|\overline{cvc^{-1}}|
\leqslant |v|$. Now, factoring $\eta_{u,v}$ as $\eta_{u,v}=\eta_{u', \overline{cvc^{-1}}}
\lambda_{c}$, we have
 $$
\| \eta_{u',\overline{cvc^{-1}}}\|_1 =|u'|+|\overline{cvc^{-1}}| \leqslant |u|-2+|v| =\|
\eta_{u,v} \|_1 -2,
 $$
and we can apply the induction hypothesis to get a factorization
$\eta_{u',\overline{cvc^{-1}}} =\varphi' \lambda_h$ with $\varphi'$ cyclically reduced and
$\| \varphi'\|_1 +2|h| \leqslant \| \eta_{u',\overline{cvc^{-1}}} \|_1$. Thus, we have
$\eta_{u,v}=\eta_{u',\overline{cvc^{-1}}}\lambda_{c} =\varphi' \lambda_h\lambda_{c}
=\varphi' \lambda_{hc}$ with
 $$
\| \varphi' \|_1 +2|hc| \leqslant \| \varphi' \|_1 +2|h| +2 \leqslant \| \eta_{u',
\overline{cvc^{-1}}} \|_1 +2\leqslant \| \eta_{u,v} \|_1 =\| \varphi \|_1.
 $$
This completes the proof of the claim and so, of the lemma.
\end{proof}

\begin{thm}\label{upper}
For every $n\geqslant 4$, we have $\alpha_2(n)\leqslant \frac{(n-1)^2}{2}$.
\end{thm}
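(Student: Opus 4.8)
The plan is to reduce a general automorphism of $F_2$ to a positive one (for which Proposition~\ref{pos} says inversion preserves the norm) composed with an inner automorphism, and then to balance the two contributions by an arithmetic--geometric mean argument.

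First I would take any $\varphi \in \aut F_2$ with $\|\varphi\|_1 \leqslant n$ and apply Lemma~\ref{+ncr} to factor it as $\varphi = \psi_1 \theta \psi_2 \lambda_g$, where $\psi_1,\psi_2$ are letter permuting, $\theta \in \aut^+ F_2$, $g \in F_2$, and crucially $\|\theta\|_1 + 2|g| \leqslant \|\varphi\|_1 \leqslant n$. Inverting gives $\varphi^{-1} = \lambda_{g^{-1}} \psi_2^{-1} \theta^{-1} \psi_1^{-1}$. Since inverses of letter permuting automorphisms are again letter permuting, Lemma~\ref{compo}(ii) lets me discard $\psi_1^{-1}$ and $\psi_2^{-1}$ at every $p$: indeed $\|\psi_2^{-1}\theta^{-1}\psi_1^{-1}\|_p = \|\theta^{-1}\|_p$.

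Next I would split according to whether $g$ is trivial. If $g=1$, then $\|\varphi^{-1}\|_1 = \|\theta^{-1}\|_1 = \|\theta\|_1 \leqslant n$ by Proposition~\ref{pos}, and since $n \leqslant (n-1)^2/2$ for $n \geqslant 4$ the bound holds. If $g \neq 1$, I would apply Lemma~\ref{compo}(iv) with $w=g^{-1}$ and $r=2$, obtaining $\|\varphi^{-1}\|_1 = \|\lambda_{g^{-1}}(\psi_2^{-1}\theta^{-1}\psi_1^{-1})\|_1 \leqslant 4|g|\cdot\|\theta^{-1}\|_\infty$. Because $\theta^{-1}$ has two nonempty image words, $\|\theta^{-1}\|_\infty \leqslant \|\theta^{-1}\|_1 - 1 = \|\theta\|_1 - 1$, again using Proposition~\ref{pos}. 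Writing $a=\|\theta\|_1 \geqslant 2$ and $b=|g|\geqslant 1$, this yields $\|\varphi^{-1}\|_1 \leqslant 4b(a-1)$ under the constraint $a+2b \leqslant n$. Setting $s=a-1$ and $t=2b$, the constraint becomes $s+t \leqslant n-1$ and the quantity to bound is $4b(a-1)=2st$; by $st \leqslant ((s+t)/2)^2 \leqslant (n-1)^2/4$ I conclude $\|\varphi^{-1}\|_1 \leqslant 2st \leqslant (n-1)^2/2$.

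The main obstacle is not any single step but getting the constants to align exactly. The factor $4|g|$ coming from the conjugation estimate of Lemma~\ref{compo}(iv) must be paired with the \emph{sharp} bound $\|\theta^{-1}\|_\infty \leqslant \|\theta\|_1 - 1$ (rather than the cruder $\|\theta^{-1}\|_\infty < \|\theta\|_1$), since it is precisely the $-1$ that turns the product into $2st$ with $s+t \leqslant n-1$ and makes the arithmetic--geometric mean bound land on $(n-1)^2/2$ instead of something strictly larger. Checking that $n \geqslant 4$ is exactly the threshold at which the linear $g=1$ case is also dominated then closes the argument.
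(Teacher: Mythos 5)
Your proposal is correct and follows essentially the same route as the paper's proof: the decomposition $\varphi=\psi_1\theta\psi_2\lambda_g$ from Lemma~\ref{+ncr}, the case split on $g$, the conjugation estimate of Lemma~\ref{compo}(iv) paired with the sharp bound $\|\theta^{-1}\|_\infty\leqslant\|\theta^{-1}\|_1-1$ and Proposition~\ref{pos}. The only (immaterial) difference is that you finish by substitution and the arithmetic--geometric mean inequality where the paper maximizes the parabola $2(n-x)(x-1)$ at its vertex $x=\frac{n+1}{2}$.
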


\begin{proof}
 Let $\varphi \in \aut F_2$ with $\| \varphi \|_1 \leqslant n$, and let us prove that
$\| \varphi^{-1}\|_1 \leqslant \frac{(n-1)^2}{2}$. Consider the decomposition given in
Lemma~\ref{+ncr}, $\varphi =\psi_1 \theta \psi_2 \lambda_g$ for some letter permuting
$\psi_1,\, \psi_2 \in \aut F_2$, some $\theta \in \aut^+ F_2$, and some $g\in F_2$ such
that $\| \theta \|_1 +2|g|\leqslant \| \varphi \|_1$.

If $g=1$ then
 $$
\| \varphi^{-1}\|_1 =\| \psi_2^{-1}\theta^{-1}\psi_1^{-1}\|_1 =\| \theta^{-1}\|_1 =\| \theta
\|_1 =\| \varphi \|_1 \leqslant n\leqslant \frac{(n-1)^2}{2},
 $$
by Lemma~\ref{compo}(ii) and Proposition~\ref{pos} (and using in the last step that
$n\geqslant 4$).

So, let us assume $g\neq 1$ in which case we have $\varphi^{-1}=\lambda_{g^{-1}}
\psi_2^{-1} \theta^{-1} \psi_1^{-1}$. By Lemma~\ref{compo} and Proposition~\ref{pos},
\begin{align*}
\| \varphi^{-1}\|_1 & \leqslant 4|g| \cdot \| \psi_2^{-1} \theta^{-1} \psi_1^{-1} \|_{\infty}
=4|g|\cdot \| \theta^{-1} \|_{\infty} \leqslant 4|g|(\| \theta^{-1} \|_1 -1) \\
 & =4|g|(\| \theta \|_1 -1).
 \end{align*}
Since we also have $\| \theta \|_1 +2|g|\leqslant \| \varphi \|_1 \leqslant n$, we deduce
$|g|\leqslant \frac{n-\| \theta\|_1}{2}$ and so,
 $$
\| \varphi^{-1} \|_1 \leqslant 2(n-\| \theta \|_1)(\| \theta \|_1 -1).
 $$
Finally, since the parabola $f(x)=2(n-x)(x-1)$ has its absolute maximum in the point
$x=\frac{n+1}{2}$, we conclude
 $$
\| \varphi^{-1} \|_1 \leqslant 2(n-\| \theta \|_1)(\| \theta \|_1 -1)\leqslant 2\Big( n-
\frac{n+1}{2}\Big) \Big( \frac{n+1}{2}-1 \Big)=\frac{(n-1)^2}{2}. \qedhere
 $$
\end{proof}

In order to establish lower bounds for $\alpha_2 (n)$, we need to construct explicit
automorphisms of $F_2$ having inverses with 1-norm much bigger than that of themselves.

\begin{thm}\label{lower2}
For $n\geqslant 10$, we have $\alpha_2 (n) \geqslant \frac{n^2}{4}-6n+42$.
\end{thm}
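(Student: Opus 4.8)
The plan is to exhibit an explicit one-parameter family of automorphisms $\varphi_k \in \aut F_2$ whose norm grows linearly in $k$ while the norm of $\varphi_k^{-1}$ grows quadratically. Since the upper bound in Theorem~\ref{upper} is quadratic, such a family is essentially optimal, so we should expect the construction to be tight up to the additive and multiplicative constants appearing in the statement. The natural candidate is a conjugation-type automorphism: take a cyclically reduced positive automorphism $\theta$ of modest norm and conjugate it by a long element $g$, setting $\varphi = \theta \lambda_g$. This mirrors exactly the decomposition $\varphi = \psi_1 \theta \psi_2 \lambda_g$ from Lemma~\ref{+ncr} that drove the upper bound, so we are simply saturating that inequality. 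The key numeric target is that for $\|\varphi\|_1 \leqslant n$ we want $\|\varphi^{-1}\|_1 \gtrsim n^2/4$; comparing with $\|\varphi^{-1}\|_1 \approx 4|g|\,\|\theta\|_1$ from the upper-bound computation suggests we should balance $\|\theta\|_1$ and $2|g|$ so that each is about $n/2$, which is precisely where the parabola $f(x)=2(n-x)(x-1)$ was maximized.

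First I would fix the simplest nontrivial positive automorphism, for instance $\theta=\eta_{a,ab}$ (or a bounded iterate thereof), whose inverse $\eta_{a,a^{-1}b}$ is controlled by Lemma~\ref{inv+} and Proposition~\ref{pos}. Next I would choose a long reduced word $g$ whose cyclic and one-sided cancellation behaviour against the letters of $a_i\theta^{-1}$ is as mild as possible, so that conjugation genuinely inflates length rather than collapsing it. Concretely, I would write $\varphi_k = \theta \lambda_{g_k}$ with $g_k$ a word of length growing linearly in $k$ chosen so that $|g_k^{-1}(a_i\theta^{-1})g_k|$ is close to $2|g_k| + |a_i\theta^{-1}|$ for each $i$; this forces $\|\varphi_k^{-1}\|_1 = \|\lambda_{g_k^{-1}}\theta^{-1}\|_1$ to be roughly $4|g_k|$ plus lower-order terms. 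Then I would compute $\|\varphi_k\|_1$ directly and verify it is linear in $k$, set $n$ equal to (or just above) this value, and substitute to read off the quadratic lower bound.

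The main obstacle, and the step requiring genuine care rather than routine algebra, is controlling cancellation precisely enough to pin down the explicit constants $-6n+42$. Getting the leading coefficient $n^2/4$ is robust and follows from the balancing heuristic above, but the exact lower-order terms depend on exactly how much cancellation occurs at the junctions when $g_k$ meets $a_i\theta^{-1}$ and $(a_i\theta^{-1})^{-1}$, and on the parity/size bookkeeping relating $|g_k|$, $\|\theta\|_1$, and $n$. I would therefore choose $g_k$ to begin and end with letters engineered to minimize boundary cancellation (for example starting and ending with a letter not cancelling against the outermost letters of the $a_i\theta^{-1}$), compute $|\overline{g_k^{-1}(a_i\theta^{-1})g_k}|$ exactly for each of the two generators, sum to get $\|\varphi_k^{-1}\|_1$ as an exact affine function of $|g_k|$, and finally optimize the relation between $|g_k|$ and $n$ (accounting for the floor/parity slack) to secure the stated inequality for all $n\geqslant 10$. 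The constants $6$ and $42$ should emerge from this exact junction accounting together with the constraint $\|\varphi_k\|_1 \leqslant n$.
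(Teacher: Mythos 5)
There is a genuine gap, and it sits exactly at the step you flag as "routine algebra". You set $\varphi_k=\theta\lambda_{g_k}$ and correctly write $\varphi_k^{-1}=\lambda_{g_k^{-1}}\theta^{-1}$, but you then evaluate its norm via the words $g_k^{-1}(a_i\theta^{-1})g_k$. Those are the images under $\theta^{-1}\lambda_{g_k}$, which is a different automorphism: with the paper's right-action conventions, $a_i(\lambda_{g_k^{-1}}\theta^{-1})=\bigl((g_k a_i g_k^{-1})\bigr)\theta^{-1}=(g_k\theta^{-1})(a_i\theta^{-1})(g_k\theta^{-1})^{-1}$. The conjugator appearing in the inverse is therefore $g_k\theta^{-1}$, not $g_k$, and its length can be as large as $|g_k|\cdot\|\theta^{-1}\|_{\infty}$. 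This product is the \emph{only} source of quadratic growth (it is exactly the term $4|g|(\|\theta\|_1-1)$ that appears in the proof of Theorem~\ref{upper}), and it forces \emph{both} $|g_k|$ and $\|\theta\|_1$ to grow linearly in $n$. Your concrete plan violates both requirements at once: you fix $\theta=\eta_{a,ab}$ (or a bounded iterate), so $\|\theta^{-1}\|_{\infty}=O(1)$ and $|g_k\theta^{-1}|=O(|g_k|)$; and your own estimate $\|\varphi_k^{-1}\|_1\approx 4|g_k|+O(1)$ is linear in $n$. Even granting your balancing heuristic $\|\theta\|_1\approx 2|g|\approx n/2$, the additive formula $4|g_k|+\|\theta^{-1}\|_1$ still only gives $O(n)$. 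So the construction as described certifies only $\alpha_2(n)\gtrsim n$ and cannot reach $\tfrac{n^2}{4}-6n+42$; the difficulty is not the constants $-6n+42$ but the leading term.

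For comparison, the paper's witnesses are $\psi_k=\eta_{ab^{2k},\,ab^{2k+1}}\lambda_{a^{-k}b}$: here $\theta_k=\eta_{ab^{2k},ab^{2k+1}}$ has $\|\theta_k\|_1=4k+3$ and $g=a^{-k}b$ has $|g|=k+1$, both linear in $k$, giving $\|\psi_k\|_1=8k+7$. Since $\theta_k^{-1}=\eta_{a(b^{-1}a)^{2k},\,a^{-1}b}$, the stretched conjugator $g\theta_k^{-1}=\bigl((a^{-1}b)^{2k}a^{-1}\bigr)^k a^{-1}b$ has length $k(4k+1)+2$, and an exact (cancellation-free) computation gives $\|\psi_k^{-1}\|_1=16k^2+8k+7$; substituting $k=\tfrac{n-7}{8}$ and handling the residues mod $8$ yields the stated bound. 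If you repair your construction by letting $\theta$ grow (e.g.\ $\theta=\eta_{a,ab}^{m}=\eta_{a,a^m b}$ with $m\sim n/2$) \emph{and} choosing $g$ so that $g\theta^{-1}$ is genuinely long and reduced against $a_i\theta^{-1}$, you recover essentially the paper's argument.
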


\begin{proof}
For $k\geqslant 0$ consider the automorphisms
 $$
\psi_k =\eta_{ab^{2k},\, ab^{2k+1}}\lambda_{a^{-k}b} =\eta_{b^{-1}a^{k+1}b^{2k}a^{-k}b,\,
b^{-1}a^{k+1}b^{2k+1}a^{-k}b}.
 $$
We have $\| \psi_k \|_1 =8k+7$. For the inverse, we have
 $$
\psi_k^{-1} =\lambda_{b^{-1}a^k} \eta^{-1}_{ab^{2k},\, ab^{2k+1}} =\lambda_{b^{-1}a^k}
\eta_{a(b^{-1}a)^{2k},\, a^{-1}b} =\eta_{u,\, v},
 $$
where $u$ and $v$ are the two words
 $$
u=((a^{-1}b)^{2k}a^{-1})^k a^{-1}ba(b^{-1}a)^{2k} b^{-1}a(a(b^{-1}a)^{2k})^k,
 $$
 $$
v=((a^{-1}b)^{2k}a^{-1})^k a^{-1}b(a(b^{-1}a)^{2k})^k.
 $$
Hence, $\| \psi_k^{-1} \|_1 = 4(4k+1)k+4k+7=16k^2+8k+7$.

Writing $n=\| \psi_k \|_1 =8k+7$, we have $k=\frac{n-7}{8}$ and then
 $$
\| \psi_k^{-1} \|_1 =16\frac{(n-7)^2}{64}+n-7+7=\frac{n^2-10n+49}{4}.
 $$
Thus, for $n\equiv 7 \mod 8$, we have $\alpha_2(n)\geqslant \frac{n^2-10n+49}{4}$.

Finally, for  every $n\geqslant 7$, let $n'$ be the unique integer congruent with 7 modulo
8 in the set $\{ n-7, \ldots ,n-1, n\}$. We have
 \begin{align*}
\alpha_2(n)&\geqslant \alpha_2(n')\geqslant \frac{n'^2-10n'+49}{4}\geqslant \frac{(n-7)^2-
10(n-7)+49}{4} \\&=\frac{n^2}{4}-6n+42,
\end{align*}
where the last inequality uses $n\geqslant 10$ since the parabola
$f(x)=\frac{x^2-10x+49}{4}$ has its minimum at $x=5$.
\end{proof}

The outer automorphism case turns out to be simpler:

\begin{thm}\label{gamma}
For every $\Phi \in \Out F_2$, $\| \Phi^{-1}\|_1 = \| \Phi \|_1$. Consequently,
$\beta_2(n)=n$.
\end{thm}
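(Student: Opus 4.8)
The plan is to reduce the whole statement to the positive case, where Proposition~\ref{pos} already supplies norm-invariance under inversion, exploiting the fact that inner conjugations are killed when passing to $\Out F_2$. First I would fix $\Phi \in \Out F_2$, pick any representative $\varphi \in \Phi$, and apply Lemma~\ref{+ncr} to factor it as $\varphi = \psi_1 \theta \psi_2 \lambda_g$ with $\psi_1,\psi_2$ letter permuting, $\theta \in \aut^+ F_2$, and $g \in F_2$. Since $\lambda_g \in \Lambda$, the inner factor disappears in the quotient, so
$$
\Phi = [\psi_1 \theta \psi_2] = [\psi_1][\theta][\psi_2], \qquad \Phi^{-1} = [\psi_2^{-1}][\theta^{-1}][\psi_1^{-1}].
$$
This is the decisive simplification: after discarding $\lambda_g$, the only genuinely non-trivial factor is the positive automorphism $\theta$ (and its inverse), sandwiched between letter permutations.

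Next I would collapse the letter-permuting factors on both sides. By Lemma~\ref{outcompo}(i), $\|\Phi\|_1 = \|[\theta]\|_1$ and $\|\Phi^{-1}\|_1 = \|[\theta^{-1}]\|_1$. Now $\theta$ is positive, hence cyclically reduced, and by Lemma~\ref{inv+} its inverse $\theta^{-1}$ is cyclically reduced as well; therefore Lemma~\ref{ocr} converts both outer norms into ordinary norms, $\|[\theta]\|_1 = \|\theta\|_1$ and $\|[\theta^{-1}]\|_1 = \|\theta^{-1}\|_1$. Finally Proposition~\ref{pos} gives $\|\theta^{-1}\|_1 = \|\theta\|_1$, and chaining these equalities yields
$$
\|\Phi^{-1}\|_1 = \|\theta^{-1}\|_1 = \|\theta\|_1 = \|\Phi\|_1,
$$
which is the first assertion.

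For the consequence $\beta_2(n) = n$, the upper bound is immediate: every term $\|\Phi^{-1}\|_1$ in the defining maximum equals $\|\Phi\|_1 \leqslant n$, so $\beta_2(n) \leqslant n$ (and when the index set is empty the value is $0 \leqslant n$). For the matching lower bound I would, for each $n \geqslant 2$, exhibit an outer automorphism of norm exactly $n$; for instance $\eta_{a,\,a^{n-2}b}$ is a cyclically reduced automorphism of norm $n$, so by Lemma~\ref{ocr} its class has outer norm $n$, forcing $\beta_2(n) \geqslant n$, while $n=0$ is covered trivially.

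The step requiring the most care is the symmetric application of Lemma~\ref{ocr} to both $\theta$ and $\theta^{-1}$: passing from outer norms to ordinary norms on each side demands that \emph{both} automorphisms be cyclically reduced, which is precisely what Lemma~\ref{inv+} guarantees and is the point where the argument is genuinely tied to the special structure of positive automorphisms of $F_2$. Everything else is bookkeeping with the homomorphism $[\cdot]\colon \aut F_2 \to \Out F_2$ and the invariance properties already established in Lemmas~\ref{compo} and~\ref{outcompo}.
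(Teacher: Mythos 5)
Your argument is correct and follows essentially the same route as the paper's proof: decompose via Lemma~\ref{+ncr}, kill the inner factor, strip the letter permutations with Lemma~\ref{outcompo}(i), use Lemma~\ref{inv+} to see that $\theta^{-1}$ is cyclically reduced so that Lemma~\ref{ocr} applies on both sides, and finish with Proposition~\ref{pos}. Your explicit witness $\eta_{a,\,a^{n-2}b}$ for the lower bound $\beta_2(n)\geqslant n$ is a small addition the paper leaves implicit, but it does not change the substance of the argument.
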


\begin{proof}
Take $\varphi \in \Phi$. By Lemma~\ref{+ncr}, $\varphi =\psi_1 \theta \psi_2 \lambda_g$
for some letter permuting automorphisms $\psi_1,\, \psi_2 \in \aut F_2$, some $\theta
\in \aut^+ F_2$ and some element $g\in F_2$. Then, Lemmas~\ref{outcompo}(i) and
\ref{ocr} yield
 $$
\| \Phi \|_1 = \| [\varphi] \|_1 = \| [\psi_1 \theta \psi_2 \lambda_g] \|_1 =  \| [\psi_1
\theta \psi_2] \|_1 = \| [\theta] \|_1 = \| \theta \|_1.
 $$
Also, by Lemma~\ref{outcompo}(i), we get
  $$
\| \Phi\inv \|_1 = \| [\varphi\inv] \|_1 = \| [\lambda_{g\inv}\psi_2\inv \theta\inv \psi_1
\inv] \|_1 =  \| [\psi_2\inv \theta\inv \psi_1\inv] \|_1 = \| [\theta\inv] \|_1.
 $$
Since $\theta\inv$ is cyclically reduced by Lemma~\ref{inv+}, we may use Lemma~\ref{ocr} to
get $\| \Phi\inv \|_1 = \| [\theta\inv] \|_1 = \| \theta\inv \|_1$. Since $\| \theta \|_1 =
\| \theta\inv \|_1$ by Proposition~\ref{pos}, we get  $\| \Phi^{-1}\|_1 = \| \Phi \|_1$.
Therefore $\beta_2(n)=n$.
\end{proof}

\section{Higher rank}
\label{higherrank}

In this section, we consider arbitrary rank $r\geqslant 2$, compute polynomial lower bounds
for both  $\alpha_r(n)$ and $\beta_r(n)$, and show that $\beta_r(n)$ admits a polynomial
upper bound.

The polynomial lower bounds for $\alpha_r(n)$ and $\beta_r(n)$ have degrees $r$ and
$r-1$, respectively. In particular, this separates the asymptotic behaviour of the rank
two case from all other ranks, with respect to both complexity functions. That is,
$\xi_2(n)$  grows more slowly than $\xi_r(n)$ for all $r\geqslant 3$ and $\xi \in \{
\alpha, \beta \}$, which agrees with the intuitive fact that $\aut F_r$ is a much
easier group to deal with for $r=2$ than for higher rank.

Finally, the polynomial upper bound for $\beta_r(n)$ is established with the help of
the theory of Outer space.

We assume the rank $r$ fixed throughout the whole section.

\subsection{Lower bounds}

Our lower bound for $\beta_r(n)$ is obtained by abelianization of positive automorphisms.
The extra unit in the degree of the lower bounds from $\beta_r(n)$ to $\alpha_r(n)$ will be
achieved by additionally composing the positive automorphisms with a suitable conjugation
that increases in size when inverting. We thank Warren Dicks for suggesting us to use the
following automorphisms; this significantly simplified our previous proof of the lower
bounds for $\alpha_r(n)$ and $\beta_r(n)$.

We start by defining, for every $p \in \mathbb{Z}$, a matrix $M^{(p)} = (m^{(p)}_{i,j})\in
\GL_r(\mathbb{Z}) = \aut \mathbb{Z}^r$ given by
  $$
m^{(p)}_{i,j} = \begin{cases} 1, &\text{ if }i = j; \\ p, &\text{ if }  j = i+1; \\ 0,
&\text{ otherwise}. \end{cases}
  $$

Note that $\det M^{(p)} = 1$ and so $M^{(p)}$ is indeed invertible.

\begin{lem}
\label{coin} For all $r\geqslant 2$ and $p \in \mathbb{Z}$, let $N^{(p)} = (n^{(p)}_{i,j})
\in \GL_r(\mathbb{Z})$ be defined by
 $$
n^{(p)}_{i,j} = \begin{cases} 1, &\text{ if }i = j; \\ (-p)^{j-i}, &\text{ if } i< j; \\ 0,
&\text{ otherwise}. \end{cases}
 $$
Then $N^{(p)} = (M^{(p)})\inv$.
\end{lem}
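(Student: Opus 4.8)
The plan is to verify directly that $M^{(p)} N^{(p)} = I_r$, which, since both matrices lie in $\GL_r(\mathbb{Z})$, establishes $N^{(p)} = (M^{(p)})\inv$. I would first observe that $M^{(p)}$ is the bidiagonal matrix $I_r + pE$, where $E$ is the nilpotent shift matrix whose only nonzero entries are $1$'s on the first superdiagonal; then $E^k$ has $1$'s on the $k$-th superdiagonal and $E^r = 0$. Since $N^{(p)}$ is precisely $\sum_{k=0}^{r-1}(-p)^k E^k$ (its $(i,j)$ entry being $(-p)^{j-i}$ for $j\geqslant i$ and $0$ otherwise), the identity $N^{(p)} = (I_r+pE)\inv$ reduces to the finite geometric series formula $(I_r+pE)\sum_{k=0}^{r-1}(-p)^kE^k = I_r$, which is valid exactly because $E$ is nilpotent.

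Alternatively, and perhaps more transparently, I would compute the product entrywise. For the $(i,j)$ entry of $M^{(p)}N^{(p)}$, only the terms $k=i$ and $k=i+1$ survive from the first factor (when $i<r$), giving $n^{(p)}_{i,j} + p\, n^{(p)}_{i+1,j}$. Three cases arise: $i=j$ yields $1 + p\cdot 0 = 1$, since $n^{(p)}_{i+1,i}=0$; the case $i>j$ yields $0+p\cdot 0 = 0$; and $i<j$ yields $(-p)^{j-i} + p(-p)^{j-i-1}$, where I would note that $i<j$ forces $i+1\leqslant j$, so that $n^{(p)}_{i+1,j} = (-p)^{j-i-1}$ is indeed the correct value to substitute. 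This last expression telescopes to $0$ because $(-p)^{j-i} = -p\,(-p)^{j-i-1}$.

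The only point requiring a little care is the boundary row $i=r$, where there is no superdiagonal entry; but then $j\leqslant r=i$ forces $j=i$, which is already covered by the diagonal case, so no separate argument is needed. There is really no substantive obstacle here: the entire content is the single cancellation $(-p)^{j-i}+p(-p)^{j-i-1}=0$ in the strictly upper triangular part, and everything else is bookkeeping. Since $M^{(p)}N^{(p)}=I_r$ and $\det M^{(p)}=1$, we conclude $N^{(p)}=(M^{(p)})\inv$, as claimed.
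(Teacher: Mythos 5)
Your proposal is correct and follows essentially the same route as the paper: an entrywise verification that $M^{(p)}N^{(p)}=I_r$, where only the terms $k=i$ and $k=i+1$ contribute and the strictly upper-triangular entries cancel via $(-p)^{j-i}+p(-p)^{j-i-1}=0$. The geometric-series reformulation $(I_r+pE)^{-1}=\sum_{k\geqslant 0}(-p)^kE^k$ is a pleasant repackaging of the same computation, not a genuinely different argument.
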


\begin{proof}
It suffices to show that $M^{(p)}N^{(p)}$ is the identity matrix. Indeed, the $(i,j)$-th
entry of the product matrix is $\sum_{k=1}^r m^{(p)}_{i,k}n^{(p)}_{k,j}=
\sum_{k=i}^{\min\{i+1,j\}} m^{(p)}_{i,k}n^{(p)}_{k,j}$ which is 0 if $j<i$ and 1 if $j=i$.
If $j>i$, we get $m^{(p)}_{i,i}n^{(p)}_{i,j}+m^{(p)}_{i,i+1}n^{(p)}_{i+1,j}=(-p)^{j-i} +
p(-p)^{j-i-1} = 0$ and the lemma is proved.
\end{proof}

We immediately obtain:

\begin{lem}
\label{invcoin} For all $r\geqslant 2$ and $p \in \mathbb{Z}$, we have $\| M^{(p)} \|_1 =r
+(r-1)p$ and $\| (M^{(p)})\inv \|_1 \geq p^{r-1}$. $\Box$
\end{lem}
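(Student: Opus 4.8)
The plan is to read off both quantities directly from the explicit matrix descriptions of $M^{(p)}$ and $(M^{(p)})^{-1}=N^{(p)}$ established in Lemma~\ref{coin}, since $\|\cdot\|_1$ for matrices is just the sum of absolute values of all entries. The first claim is an exact equality, so I would enumerate the nonzero entries of $M^{(p)}$: there are $r$ diagonal entries each equal to $1$ (contributing $r$), and $r-1$ superdiagonal entries each equal to $p$ (contributing $(r-1)|p|$). For $p\geqslant 0$ this gives $\|M^{(p)}\|_1=r+(r-1)p$ immediately; since the statement is presumably intended for the nonnegative values of $p$ that will be used to build the automorphisms in the lower-bound construction, I would note this restriction (or write $r+(r-1)|p|$ to be safe).

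**Next I would** address the lower bound $\|(M^{(p)})^{-1}\|_1\geqslant p^{r-1}$. Using $N^{(p)}$ from Lemma~\ref{coin}, its entries are $1$ on the diagonal, $(-p)^{j-i}$ for $i<j$, and $0$ below. Rather than summing everything, the cleanest route is to observe that $\|N^{(p)}\|_1$ is a sum of nonnegative terms (absolute values), so it is bounded below by any single term; I would simply pick out the top-right entry $n^{(p)}_{1,r}=(-p)^{r-1}$, whose absolute value is $|p|^{r-1}=p^{r-1}$ (again for $p\geqslant 0$). This single entry already witnesses the inequality:
\[
\|(M^{(p)})^{-1}\|_1=\|N^{(p)}\|_1=\sum_{i,j}|n^{(p)}_{i,j}|\geqslant |n^{(p)}_{1,r}|=p^{r-1}.
\]

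**The main point to be careful about**, rather than any genuine obstacle, is the dependence on the sign of $p$: the equality $\|M^{(p)}\|_1=r+(r-1)p$ as written holds verbatim only for $p\geqslant 0$, and $p^{r-1}$ likewise matches $|(-p)^{r-1}|$ cleanly only for $p\geqslant 0$ (or when $r$ is odd). Since the subsequent construction of the witnessing automorphisms will use large positive $p$, I expect this lemma is applied only in that regime, so I would state it for $p\geqslant 0$ or silently invoke absolute values. There is essentially no hard step here: the lemma is an immediate bookkeeping consequence of the closed forms for $M^{(p)}$ and $N^{(p)}$, which is exactly why the excerpt labels it as following ``immediately.'' The $\Box$ in the statement even signals that the proof is meant to be a one-line entry count.
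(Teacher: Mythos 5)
Your proof is correct and is exactly the argument the paper intends: the lemma is left as an immediate consequence of Lemma~\ref{coin} (hence the $\Box$ with no written proof), namely summing the $r$ diagonal ones and $r-1$ superdiagonal entries $p$ of $M^{(p)}$, and bounding $\|N^{(p)}\|_1$ below by the single corner entry $|(-p)^{r-1}|$. Your remark about the sign of $p$ is a fair observation, and harmless here since the lemma is only applied with $p\geqslant r\geqslant 2$.
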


For every integer $p\geqslant 2$, define $\varphi_p \in \aut^+ F_r$ by
 $$
a_i\varphi_p=\begin{cases} a_ia_{i+1}^p, &\text{ if }  \ 1 \leq i<r ; \\ a_r, &\text{ if } \
i=r. \end{cases}
  $$
Note that $\varphi_p$ is clearly onto and therefore an automorphism since free groups of
finite rank are hopfian \cite{LS}.

\begin{lem}\label{tecnic}
For all $r\geqslant 2$ and $p\geqslant 2$:
\begin{itemize}
\item[(i)] $\varphi_p^{\rm ab} = M^{(p)}$,
\item[(ii)] $a_r\varphi_p^{-1} = a_r$ and  $a_i\varphi_p^{-1}= a_i(a_{i+1}
    \varphi_p^{-1})^{-p}$ for $i = 1,\ldots,r-1$,
\item[(iii)] $\overline{a_i\varphi_p^{-1}} \in a_iA_r^*a_{i+1}\inv$ for $i=
    1,\ldots,r-1$,
\item[(iv)] $||\varphi_p^{-1}||_1 < 2|a_1\varphi_p^{-1}|$.
\end{itemize}
\end{lem}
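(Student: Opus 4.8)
The plan is to establish the four parts in order, since each one feeds the next. Part (i) is a direct computation: abelianizing $a_i\varphi_p = a_ia_{i+1}^p$ (for $i<r$) and $a_r\varphi_p = a_r$ produces the rows $e_i + p\,e_{i+1}$ and $e_r$ respectively, which is exactly the matrix $M^{(p)}$. For part (ii) I would not guess a closed form for $\varphi_p\inv$; instead I introduce the endomorphism $\theta$ defined by the downward recursion $a_r\theta = a_r$ and $a_i\theta = a_i(a_{i+1}\theta)^{-p}$ for $i=r-1,\ldots,1$, and then check directly that $\varphi_p\theta$ is the identity. Indeed, for $i<r$,
$$
a_i\varphi_p\theta = (a_ia_{i+1}^p)\theta = (a_i\theta)(a_{i+1}\theta)^p = a_i(a_{i+1}\theta)^{-p}(a_{i+1}\theta)^p = a_i,
$$
and $a_r\varphi_p\theta = a_r\theta = a_r$; left-composing $\varphi_p\theta = \mathrm{id}$ with $\varphi_p\inv$ gives $\theta = \varphi_p\inv$, which is precisely the recursion asserted in (ii).

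For part (iii) I would argue by downward induction on $i$, carrying along control of \emph{both} the first and the last letter of $\overline{a_i\varphi_p\inv}$. The base case $i=r-1$ is immediate, since $a_{r-1}\varphi_p\inv = a_{r-1}a_r^{-p}$ is reduced and lies in $a_{r-1}A_r^*a_r\inv$. For the inductive step, assume $\overline{a_{i+1}\varphi_p\inv}$ begins with $a_{i+1}$ and ends with $a_{i+2}\inv$. Then $(a_{i+1}\varphi_p\inv)\inv$ begins with $a_{i+2}$ and ends with $a_{i+1}\inv$; since these first and last letters are not mutually inverse, this word is cyclically reduced, so its power $(a_{i+1}\varphi_p\inv)^{-p}$ is reduced, has length $p\,|a_{i+1}\varphi_p\inv|$, and still begins with $a_{i+2}$ and ends with $a_{i+1}\inv$. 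Prepending $a_i$ causes no cancellation (as $a_i\neq a_{i+2}\inv$), so $\overline{a_i\varphi_p\inv} = a_i(a_{i+1}\varphi_p\inv)^{-p}$ begins with $a_i$ and ends with $a_{i+1}\inv$, closing the induction (the base case $i=r-1$ fits the same template once one notes that the single letter $a_r\inv$ is trivially cyclically reduced).

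Finally, the reduction analysis in (iii) delivers, as a by-product, the exact length recursion needed for (iv): because no cancellation occurs at either junction, $|a_i\varphi_p\inv| = 1 + p\,|a_{i+1}\varphi_p\inv|$ for $i<r$, with $|a_r\varphi_p\inv| = 1$. Since $p\geq 2$, this forces $|a_{i+1}\varphi_p\inv| \leq |a_i\varphi_p\inv|/2$, hence $|a_i\varphi_p\inv| \leq |a_1\varphi_p\inv|/2^{\,i-1}$, and summing the geometric bound gives
$$
\|\varphi_p\inv\|_1 = \sum_{i=1}^r |a_i\varphi_p\inv| \leq |a_1\varphi_p\inv|\sum_{i=1}^r 2^{-(i-1)} < 2\,|a_1\varphi_p\inv|,
$$
which is (iv). I expect the only delicate point of the whole argument to be the bookkeeping in (iii): one must confirm that no reduction occurs either in forming the powers $(a_{i+1}\varphi_p\inv)^{-p}$ or in prepending $a_i$, and it is exactly the inductive hypothesis on the first and last letters that guarantees this. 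Once (iii) is in place, both the statement (iii) and the exact length recursion underlying (iv) follow at once, so propagating the two-letter invariant through the induction is the crux of the proof.
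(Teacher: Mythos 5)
Your proof is correct and follows essentially the same route as the paper: verify the recursion in (ii) by applying $\varphi_p$, deduce (iii) by reverse induction, and sum a geometric series for (iv). Your careful tracking of both the first and last letters of $\overline{a_i\varphi_p\inv}$ is precisely the bookkeeping the paper leaves implicit when it says ``(iii) follows from (ii) by reverse induction.''
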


\begin{proof}
(i) is clear.

To get (ii), it suffices to compute $(a_i(a_{i+1}\varphi_p^{-1})^{-p})\varphi_p =
(a_i\varphi_p)a_{i+1}^{-p} = a_i$ for $i < r$. Then (iii) follows from (ii) by reverse
induction.

Finally, to see~(iv) observe that by (iii) the product $a_i(a_{i+1}\varphi_p^{-1})^{-p}$ is
reduced and so $|a_i\varphi_p^{-1}| > p|a_{i+1}\varphi_p^{-1}|$ for every $i < r$. Hence
$|a_i\varphi_p^{-1}| < \frac{1}{p^{i-1}}|a_{1}\varphi_p^{-1}|$ for $i = 2,\ldots, r$ and so
 $$
||\varphi_p^{-1}||_1 = \sum_{i=1}^r |a_i\varphi_p^{-1}| < (1+\frac{1}{p} + \cdots +
\frac{1}{p^{r-1}}) |a_1\varphi_p^{-1}| < 2|a_1\varphi_p^{-1}|.  \qedhere
 $$
\end{proof}

Now we are ready to state and prove the lower bounds for our complexity functions.

\begin{thm}\label{highrank}
For every $r\geqslant 2$, there exists constants $K_r,K'_r > 0$ such that, for every $n
\geqslant 1$:
\begin{itemize}
\item[(i)] $K_rn^{r} \leqslant \alpha_r(n)$,
\item[(ii)] $K'_rn^{r-1}\leqslant \beta_r(n)$.
\end{itemize}
\end{thm}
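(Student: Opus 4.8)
The plan is to use the positive automorphisms $\varphi_p$ together with Lemmas~\ref{invcoin} and~\ref{tecnic}, treating the two bounds separately.

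For (ii), I would take the outer automorphism $\Phi_p=[\varphi_p]$. Since $\varphi_p$ is positive, Lemma~\ref{ab} gives $\|\Phi_p\|_1\leqslant\|\varphi_p\|_1=\|\varphi_p^{\rm ab}\|_1=\|M^{(p)}\|_1=r+(r-1)p$, while the same lemma applied to $\varphi_p^{-1}$, together with $(\varphi_p^{-1})^{\rm ab}=(M^{(p)})^{-1}$ and Lemma~\ref{invcoin}, yields $\|\Phi_p^{-1}\|_1=\|[\varphi_p^{-1}]\|_1\geqslant\|(M^{(p)})^{-1}\|_1\geqslant p^{r-1}$. Thus, given $n$, choosing $p$ to be the largest integer with $r+(r-1)p\leqslant n$ makes $\Phi_p$ admissible in the definition of $\beta_r(n)$, so $\beta_r(n)\geqslant p^{r-1}$; since such $p$ grows like $n/(r-1)$, this gives $\beta_r(n)\geqslant K'_rn^{r-1}$ after adjusting the constant to absorb the finitely many small values of $n$.

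For (i), the extra degree is produced by conjugating $\varphi_p$ by a power of $a_1$, as anticipated in the text. I would set $\psi_p=\varphi_p\lambda_{a_1^p}$, so that $a_i\psi_p=a_1^{-p}(a_i\varphi_p)a_1^p$ and hence $\|\psi_p\|_1\leqslant 2rp+\|\varphi_p\|_1\leqslant 4rp$ remains linear in $p$. For the inverse, $\psi_p^{-1}=\lambda_{a_1^{-p}}\varphi_p^{-1}$ gives $a_i\psi_p^{-1}=W\,(a_i\varphi_p^{-1})\,W^{-1}$ with $W=(a_1\varphi_p^{-1})^p$. By Lemma~\ref{tecnic}(iii), $a_1\varphi_p^{-1}$ begins with $a_1$ and ends with $a_2^{-1}$, so $W$ is reduced and $|W|=p\,|a_1\varphi_p^{-1}|\geqslant p\cdot p^{r-1}=p^{r}$, where the estimate $|a_1\varphi_p^{-1}|\geqslant p^{r-1}$ follows from the recursion $|a_i\varphi_p^{-1}|>p\,|a_{i+1}\varphi_p^{-1}|$ in Lemma~\ref{tecnic}(ii)--(iii).

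The delicate point in (i) is the potential collapse of the conjugates $W(a_i\varphi_p^{-1})W^{-1}$ under reduction. Rather than controlling all coordinates, I would extract the bound from the single coordinate $i=r$: since $a_r\varphi_p^{-1}=a_r$ by Lemma~\ref{tecnic}(ii), we have $a_r\psi_p^{-1}=Wa_rW^{-1}$, and for $r\geqslant 3$ neither junction reduces, as the last letter $a_2^{-1}$ of $W$, the letter $a_r$, and the first letter $a_2$ of $W^{-1}$ form the reduced word $a_2^{-1}a_ra_2$. Hence $|a_r\psi_p^{-1}|=2|W|+1\geqslant 2p^{r}$, so $\|\psi_p^{-1}\|_1\geqslant 2p^{r}$. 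Finally, for each $n$ I would take $p=\lfloor n/(4r)\rfloor$, making $\psi_p$ admissible for $\alpha_r(n)$; then $\alpha_r(n)\geqslant\|\psi_p^{-1}\|_1\geqslant K_rn^{r}$ after adjusting the constant. The remaining case $r=2$, where the coordinate trick breaks because $a_r=a_2$, is already covered by Theorem~\ref{lower2}.
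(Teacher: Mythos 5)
Your proposal is correct and follows essentially the same route as the paper: part (ii) via abelianization of $\varphi_p$ together with Lemmas~\ref{ab} and~\ref{invcoin}, and part (i) via $\psi_p=\varphi_p\lambda_{a_1^p}$ with reducedness controlled by Lemma~\ref{tecnic}(iii) and the case $r=2$ deferred to Theorem~\ref{lower2}. The only (cosmetic) difference is that you extract the lower bound on $\|\psi_p^{-1}\|_1$ from the single coordinate $a_r\psi_p^{-1}=Wa_rW^{-1}$, giving $2p^r$, whereas the paper sums $|(a_1^pa_ia_1^{-p})\varphi_p^{-1}|$ over $i=3,\ldots,r$ and invokes Lemma~\ref{tecnic}(iv) to get $(r-2)p^r$; both yield the same asymptotics.
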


\begin{proof}
Let $p \geqslant r$. By Lemmas \ref{ab}, \ref{invcoin} and \ref{tecnic}(i), we have
\begin{equation}\label{qui1}
\| \varphi_p \|_1 = \| [\varphi_p] \|_1 = \| \varphi_p^{\rm ab} \|_1 = \| M^{(p)} \|_1 = r
+(r-1)p \leqslant rp.
\end{equation}
On the other hand, the same results yield
\begin{equation}\label{qui2}
\| \varphi_p\inv \|_1 \geqslant \| [\varphi_p\inv] \|_1 \geqslant \| (\varphi_p\inv)^{\rm
ab} \|_1 = \| (\varphi_p^{\rm ab})\inv \|_1 = \| (M^{(p)})\inv \|_1 \geqslant p^{r-1}.
\end{equation}

Let $n_0 =\max \left\{ r^2,\, \frac{(r-1)2^{\frac{1}{r-1}}}{2^{\frac{1}{r-1}}-1} \right\}$
and consider $n\geqslant n_0$. Take the integer $p=\lfloor \frac{n}{r}\rfloor \geqslant r$,
which satisfies $\frac{n-(r-1)}{r}\leqslant p\leqslant \frac{n}{r}$ and so $rp\in \{
n-(r-1), \ldots, n \}$. The outer automorphism $[\varphi_p ]\in \Out(F_r)$ satisfies $\|
[\varphi_p] \|_1 \leqslant rp \leqslant n$; and, on the other hand, $\| [\varphi_p\inv]
\|_1 \geqslant p^{r-1} \geqslant (\frac{n-(r-1)}{r})^{r-1} =
\frac{(n-(r-1))^{r-1}}{r^{r-1}}$. Now it is straightforward to check that
 $$
(n-a)^s \geqslant \frac{n^s}{2} \iff n \geqslant \frac{a2^{\frac{1}{s}}}{2^{\frac{1}{s}}-1}
 $$
holds for all positive integers $s,a,n$. Hence, we deduce that
  $$
\| [ \varphi_p\inv ] \|_1 \geqslant \frac{1}{2r^{r-1}}n^{r-1}
  $$
(using that $n\geqslant \frac{(r-1)2^{\frac{1}{r-1}}}{2^{\frac{1}{r-1}}-1}$). We conclude
that $\beta_r(n)\geqslant \frac{1}{2r^{r-1}}n^{r-1}$ for $n\geqslant n_0$. Adjusting the
value of the constant $\frac{1}{2r^{r-1}}$ to cover the finitely many missing values of
$n$, (ii) holds.

To prove (i) let us restrict ourselves to the case $r\geqslant 3$ (Theorem~\ref{lower2}
already deals with the case $r=2$). Fix $p\geqslant r$ and let $\psi_p
=\varphi_p\lambda_{a_1^p}$. Then \eqref{qui1} yields
  $$
\| \psi_p \|_1 = \sum_{i=1}^r |a_1^{-p}(a_i\varphi_p)a_1^p| \leqslant 2rp +\| \varphi_p \|_1
\leqslant 3rp.
 $$
On the other hand,
  $$
\| \psi_p\inv \|_1 = \| \lambda_{a_1^{-p}}\varphi_p\inv \|_1 >\sum_{i=3}^r |(a_1^{p}a_i
a_1^{-p}) \varphi_p\inv|.
 $$
Since the products $(a_1\varphi_p\inv)^p(a_i\varphi_p\inv)(a_1\inv\varphi_p\inv)^p$ are
reduced by Lemma \ref{tecnic}(iii), it follows that $\| \psi_p\inv \|_1 >
2(r-2)p|a_1\varphi_p\inv| >(r-2)p||\varphi_p^{-1}||_1 \geqslant (r-2)p^r$, by Lemma
\ref{tecnic}(iv) and \eqref{qui2}.

This shows that, for $n=3rp$ and $p\geqslant r$, we have $\alpha_r(n) >(r-2)p^r
=\frac{r-2}{(3r)^r}n^r$ i.e., (i) is proven for all such values of $n$. Finally, the
extension of this inequality to all values of $n$ (after adjusting properly the
multiplicative constant) proceeds similarly to part (ii).
\end{proof}

As a final remark for this section, it seems clear that this exhausts the potential of
abelianization techniques to provide lower bounds. If the growths of our complexity
functions are strictly bigger than what we have proven here, this will have to be obtained
by more intricate counting techniques working above the abelian level.

\subsection{Upper bounds}

We can present a polynomial upper bound for $\beta_r(n)$ using Outer space techniques. We
thank M. Bestvina for suggesting a simplification of our initial arguments, which leads to
a very easy and elegant proof of such a polynomial upper bound, now essentially a corollary
of a recent result about the asymmetry of the Lipschitz metric in Outer space.

Let us briefly recall what Outer space $\mathcal{X}_r$ is, $r\geqslant 2$, following the
notation from~\cite{AB} (see~\cite{V} for more details).

By the term \emph{graph} we mean a finite graph $\Gamma$ of rank $r$, all whose vertices
have degree at least three. A {\it metric} on $\Gamma$ is a function $\ell \colon
E\Gamma\to [0,1]$ defined on the set of edges of $\Gamma$ such that $\sum_{e\in E\Gamma}
\ell(e)=1$ and the set of length zero edges forms a forest. Let us denote by
$\Sigma_{\Gamma}$ the space of all such metrics $\ell$ on $\Gamma$, viewed as a ``simplex
with missing faces'' (corresponding to degenerate metrics that vanish on a subgraph which
is not a forest). If $\Gamma'$ is obtained from $\Gamma$ by collapsing a forest, then we
will naturally consider $\Sigma_{\Gamma'}$ as a subset of $\Sigma_{\Gamma}$ along the
inclusion given by assigning length zero to the collapsed edges.

Fix the rose graph $R_r$ with one vertex (denoted $o$) and $r$ edges, and identify the free
group $F_r$ with the fundamental group $\pi_1(R_r, o)$ in such a way that each generator
$a_i$ corresponds to a single oriented edge of $R_r$. Under this identification, each
reduced word in $F_r$ corresponds to a reduced edge-path loop starting and ending at the
basepoint $o$ in $R_r$.

A \emph{marked graph} is a pair $(\Gamma, f)$ where $f$ is a \emph{marking} i.e., a
homotopy equivalence from the rose $R_r$ to $\Gamma$. It is standard to consider the
set of marked graphs modulo the following equivalence relation: $(\Gamma, f) \sim
(\Gamma', f')$ if and only if there is a homeomorphism $\mu \colon \Gamma \to \Gamma'$
such that $f\mu$ is homotopic to $f'$. Denote it by $\mathcal{MG}/\sim$.

Noting that all representatives of a given class $[(\Gamma, f)]\in \mathcal{MG}/\sim$ share
a common underlying graph, we can consider the space of metrics on $\Gamma$ and denote it
$\Sigma_{[(\Gamma, f)]}$. Now, the \emph{Outer Space} $\mathcal{X}_r$ is obtained from the
disjoint union
 $$
\bigsqcup_{[(\Gamma, f)]\in \mathcal{MG}/\sim} \Sigma_{[(\Gamma, f)]}
  $$
by identifying the faces of the simplices along the above natural inclusions. Thus, a point
in $\mathcal{X}_r$ is represented by a triple of the form $(\Gamma, f, \ell)$.

There is a natural action of $\aut F_r$ on $\mathcal{X}_r$. Given $\varphi \in \aut F_r$,
realize it on the rose, say $\varphi\colon R_r \to R_r$, and for every point
$x=(\Gamma,f,\ell)\in \mathcal{X}_r$ define $\varphi \cdot x$ to be $(\Gamma, \varphi f,
\ell)$. It is easy to see that this is well defined and gives an action of $\aut F_r$ on
$\mathcal{X}_r$. Notice that, by construction, inner automorphisms act trivially; so, what
we have is in fact an action of $\Out F_r$ on $\mathcal{X}_r$.

Recently, the Lipschitz metric for $\mathcal{X}_r$ has been introduced and initially
studied in~\cite{FM}, followed by other authors (see, for example, \cite{AB}). This metric
can be defined as follows.

Let $x,x'\in \mathcal{X}_r$ be two points in the Outer space; take representatives, say
$(\Gamma, f,\ell)$ and $(\Gamma', f', \ell')$, respectively. A \emph{difference of
markings} is a map $\mu \colon \Gamma \to \Gamma'$ which is linear on edges, and such that
$f\mu$ is homotopic to $f'$. For such a difference of markings one can define $\sigma(\mu)$
to be the largest slope of $\mu$ over all edges $e\in E\Gamma$. Then define the distance
from $x$ to $x'$ as
  $$
d(x,x')=\min_{\mu} \, \{ \log \sigma(\mu) \},
 $$
where the minimum is taken over all possible differences of markings (and achieved by
Arzela-Ascoli's Theorem).

The basic properties of this ``distance'' are the following: (1) $d(x,y)\geqslant 0$,
with equality if and only if $x=y$; (2) $d(x,z)\leqslant d(x,y)+d(y,z)$ for all
$x,y,z\in \mathcal{X}_r$; (3) $\Out F_r$ acts by isometries i.e., $d([\varphi] \cdot x,
[\varphi]\cdot y)=d(\varphi \cdot x, \varphi\cdot y) = d(x,y)$ for all $x,y\in
\mathcal{X}_r$ and $\varphi \in \aut F_r$; but (4) $d(x,y)\neq d(y,x)$ in general.
See~\cite{FM} and~\cite{AB} for details.

For $\epsilon >0$, define the $\epsilon$-\emph{thick part} of $\mathcal{X}_r$ as
  $$
\mathcal{X}_r (\epsilon) = \{ (\Gamma,f,\ell) \in \mathcal{X}_r \mid \ell(p)\geqslant \epsilon
\,\,\, \forall p \text{ nontrivial closed path in } \Gamma \}.
  $$

The following is an interesting result from Y. Algom-Kfir and M. Bestvina (see
\cite[Theorem~23]{AB}):

\begin{thm}[Algom-Kfir, Bestvina]\label{AB}
Let $r\geqslant 2$. For any $\epsilon>0$ there is a constant $M=M(r, \epsilon) > 0$ such
that, for all $x,y \in \mathcal{X}_r(\epsilon)$,
 $$
d(x,y) \leqslant M \cdot d(y,x).
 $$
\end{thm}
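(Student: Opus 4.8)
The plan is to treat this as a statement about the \emph{maximal stretching ratio} rather than about $\log$-distances directly, and to exploit the volume-one normalisation as a nonabelian surrogate for a determinant-one constraint. First I would invoke the candidate description of the Lipschitz metric (Francaviglia--Martino): for $x,y\in\mathcal{X}_r$ one has $d(x,y)=\log\Lambda(x,y)$, where $\Lambda(x,y)=\max_{\gamma}\frac{\ell_y(\gamma)}{\ell_x(\gamma)}$ and the maximum is attained on a finite set $\mathrm{Cand}(x)$ of \emph{candidate} loops of $\Gamma_x$ (embedded loops, figure-eights, and dumbbells), whose cardinality is bounded by a constant $N(r)$, which contain a free basis of $F_r$, and each of which crosses every edge at most twice. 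The target inequality $d(x,y)\le M\,d(y,x)$ is then equivalent to $\Lambda(x,y)\le\Lambda(y,x)^{M}$.

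Next I would record the two a priori bounds that the thick part supplies. Since every candidate is a nontrivial closed path, thickness gives $\ell_z(\gamma)\ge\epsilon$ for all $z\in\mathcal{X}_r(\epsilon)$ and all $\gamma\in\mathrm{Cand}(z)$; and since $\sum_e\ell(e)=1$ and a candidate crosses each edge at most twice, $\ell_z(\gamma)\le 2$. Thus every candidate of a thick point has length pinched in $[\epsilon,2]$ in its own metric, and it crosses at most $6r-6$ edges. The core of the argument is then to convert a single greatly stretched candidate of $\Gamma_x$ into a greatly stretched candidate of $\Gamma_y$ for the reverse comparison, with a \emph{uniform} exponent. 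Write $\Lambda_2=\Lambda(x,y)$, realised by $\gamma_0\in\mathrm{Cand}(x)$, so that $\ell_y(\gamma_0)=\Lambda_2\,\ell_x(\gamma_0)\ge\Lambda_2\epsilon$ while $\gamma_0$ is combinatorially short in $\Gamma_x$. The guiding heuristic is homological: the abelianised change of marking is a matrix $A\in\GL_r(\mathbb{Z})$ with $\det A=\pm1$, for which the cofactor expansion bounds the entries of $A^{-1}$ by minors of $A$, giving $\|A^{-1}\|_1\le C_r\|A\|_1^{\,r-1}$; this already yields the desired multiplicative control at the level of $\mathbb{Z}^r$. The plan is to lift this cofactor bound to the full length functions: using that the candidates of $\Gamma_y$ form a generating set of size $\le N(r)$ with $y$-lengths in $[\epsilon,2]$, I would express $\gamma_0$ as a reduced word in these candidates, bound its combinatorial $y$-length below by $\ell_y(\gamma_0)\ge\Lambda_2\epsilon$, and then pigeonhole over the $\le N(r)$ candidates, together with the volume normalisation, to locate a candidate $\delta$ of $\Gamma_y$ with $\frac{\ell_x(\delta)}{\ell_y(\delta)}\ge\Lambda_2^{1/M}$; this forces $\Lambda(y,x)\ge\Lambda_2^{1/M}$, i.e. the theorem.

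The hard part is exactly this last extraction, for two reasons. Purely metrically, a $\Lambda(y,x)$-Lipschitz difference of markings may \emph{shrink} loops by an unbounded amount, so asymmetry is genuinely possible off the thick part; the thickness hypothesis must be used essentially to prevent the shrinking from concentrating on a single loop. Concretely, I would need a \emph{uniform} bounded-cancellation constant $B=B(r,\epsilon)$ — depending only on the rank and on $\epsilon$, and \emph{not} on how far apart $x$ and $y$ are — so that combinatorial length in $\Gamma_y$ is comparable to word length in the candidate generating set; proving that such a $B(r,\epsilon)$ exists uniformly on $\mathcal{X}_r(\epsilon)$ is the delicate technical point. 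A cleaner alternative, which I would pursue in parallel, replaces the combinatorial bookkeeping by optimal-map technology: take an optimal $\Lambda(y,x)$-Lipschitz map $\Gamma_y\to\Gamma_x$, observe that its maximal-slope subgraph contains a candidate realising $\Lambda(y,x)$, and bound the Lipschitz constant of a homotopy inverse on the thick part — again the only nontrivial input being that thickness forbids large-scale collapsing, which is precisely what produces the uniform exponent $M=M(r,\epsilon)$.
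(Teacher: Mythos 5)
First, a point of reference: the paper does not prove this statement at all. It is quoted verbatim from Algom-Kfir and Bestvina \cite[Theorem~23]{AB} and used as a black box in Corollary~\ref{highup}, so there is no internal proof to compare yours against; your attempt has to be judged as a self-contained proof of the cited theorem.

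Judged that way, it has a genuine gap, and you have in fact located it yourself: everything hinges on the ``extraction'' step, namely producing from a candidate $\gamma_0$ of $\Gamma_x$ with $\ell_y(\gamma_0)\geqslant\Lambda(x,y)\,\epsilon$ a candidate $\delta$ of $\Gamma_y$ with $\ell_x(\delta)/\ell_y(\delta)\geqslant\Lambda(x,y)^{1/M}$, and you explicitly defer the uniform bounded-cancellation constant $B(r,\epsilon)$ that this would require. Without that input the pigeonhole argument does not start: writing $\gamma_0$ as a word of length $k\gtrsim\Lambda(x,y)\epsilon/2$ in candidates of $\Gamma_y$ only bounds $\ell_x(\gamma_0)$ \emph{above} by $k\max_\delta\ell_x(\delta)$, whereas you already know $\ell_x(\gamma_0)\leqslant 2$, so no lower bound on any $\ell_x(\delta)$ follows. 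The ``cleaner alternative'' (bound the Lipschitz constant of a homotopy inverse of an optimal map on the thick part) is not an alternative proof but a restatement of the theorem. Two further problems would remain even if the extraction were fixed. The homological heuristic does not lift: the Lipschitz metric measures lengths of conjugacy classes in graphs, and these are not controlled by abelianization (a long commutator is homologically trivial), so the cofactor bound $\|A^{-1}\|_1\leqslant C_r\|A\|_1^{r-1}$ gives no information about $\Lambda(y,x)$. And any coarse counting argument of the kind you describe naturally yields $d(x,y)\leqslant M\,d(y,x)+C$ with an additive constant; the stated theorem is purely multiplicative, so the regime where $d(y,x)\to 0$ needs a separate local comparison of the two metrics on the thick part, which your plan does not address.
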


As an easy corollary, we obtain our polynomial upper bound for $\beta_r(n)$:

\begin{cor}\label{highup}
For every $r\geqslant 2$, there exist constants $K_r,M_r > 0$ such that $\beta_r(n)
\leqslant K_rn^{M_r}$ for every $n\geqslant 1$.
\end{cor}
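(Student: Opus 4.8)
The plan is to transfer the asymmetry bound of Theorem~\ref{AB} from the Lipschitz metric on $\mathcal{X}_r$ to the norms $\|\Phi\|_1$ and $\|\Phi^{-1}\|_1$, by showing that the Lipschitz displacement of a fixed basepoint under $\Out F_r$ is comparable, up to an additive constant in the logarithm, to the logarithm of the norm. Fix the rose $R_r$ with the uniform metric $\ell_0$ assigning $\ell_0(e)=1/r$ to every edge, and set $x_0=(R_r,\mathrm{id},\ell_0)$. Since the action of $\Out F_r$ on $\mathcal{X}_r$ alters only the marking, the point $\Phi\cdot x_0=(R_r,\varphi,\ell_0)$ shares the underlying metric graph $(R_r,\ell_0)$ with $x_0$; the shortest nontrivial closed path in either is a single petal, of $\ell_0$-length $1/r$. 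Hence, for \emph{every} $\Phi$, both $x_0$ and $\Phi\cdot x_0$ lie in the thick part $\mathcal{X}_r(1/r)$, which is exactly the hypothesis needed to apply Theorem~\ref{AB} with $\epsilon=1/r$ and a constant $M=M(r,1/r)$.

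First I would record the easy inequality $d(x_0,\Phi\cdot x_0)\leqslant\log\|\Phi\|_1$ straight from the definition of the metric. For $\varphi\in\Phi$, the map $\mu_\varphi\colon R_r\to R_r$ sending the $i$-th edge linearly onto the reduced edge-path spelling $a_i\varphi$ is a difference of markings homotopic to $\varphi$, with slope $|a_i\varphi|$ on that edge; thus $\sigma(\mu_\varphi)=\|\varphi\|_\infty$ and $d(x_0,\Phi\cdot x_0)\leqslant\log\|\varphi\|_\infty\leqslant\log\|\varphi\|_1$. Minimizing over $\varphi\in\Phi$ (and using that the $1$-norm minimizer $\varphi_0$ satisfies $\|\varphi_0\|_\infty\leqslant\|\varphi_0\|_1=\|\Phi\|_1$) gives the claim.

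The reverse inequality $\|\Phi\|_1\leqslant 3r\,e^{d(x_0,\Phi\cdot x_0)}$ is the crux, and its one delicate point is the main obstacle. I would take an optimal difference of markings $\mu\colon R_r\to R_r$, homotopic to some $\varphi\in\Phi$, realizing $\log\sigma(\mu)=d(x_0,\Phi\cdot x_0)$. Since every edge has $\ell_0$-length $1/r$, the image $\mu(e_i)$ is an immersed path of $\ell_0$-length at most $\sigma(\mu)/r$, hence crossing at most $\sigma(\mu)+1$ edges. The difficulty is that $\mu$ need not fix the basepoint, so a priori it controls only the cyclic lengths $|a_i\varphi|_{\mathrm{cyc}}$ and not the actual lengths $|a_i\varphi|$ defining $\|\Phi\|_1$. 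I would close this gap by a basepoint correction: homotoping $\mu$ so that it sends $o$ to $o$ drags $\mu(o)$ back along a path of $\ell_0$-length at most $1/(2r)$ (the distance from any point of $R_r$ to its vertex), which raises each slope by at most $1$. The resulting based map realizes a representative of $\Phi$ whose generator images each cross at most $\sigma(\mu)+1$ edges, so $\|\Phi\|_1\leqslant r(\sigma(\mu)+1)\leqslant 3r\,e^{d(x_0,\Phi\cdot x_0)}$. Verifying that this correction inflates the Lipschitz constant by only a bounded factor independent of $\Phi$ is precisely where the explicit geometry of the thick rose is genuinely used.

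Finally I would assemble the chain. Applying the reverse inequality to $\Phi^{-1}$, the identity $d(x_0,\Phi^{-1}\cdot x_0)=d(\Phi\cdot x_0,x_0)$ coming from the isometric action (property~(3) applied to $\Phi$), Theorem~\ref{AB} in the form $d(\Phi\cdot x_0,x_0)\leqslant M\,d(x_0,\Phi\cdot x_0)$, and the easy inequality, yield
$$\log\|\Phi^{-1}\|_1\leqslant d(x_0,\Phi^{-1}\cdot x_0)+\log(3r)=d(\Phi\cdot x_0,x_0)+\log(3r)\leqslant M\log\|\Phi\|_1+\log(3r).$$
Consequently $\|\Phi^{-1}\|_1\leqslant 3r\,\|\Phi\|_1^{M}$, and taking the maximum over all $\Phi\in\Out F_r$ with $\|\Phi\|_1\leqslant n$ gives $\beta_r(n)\leqslant 3r\,n^{M}$, which is the assertion with $M_r=M$ and $K_r=3r$.
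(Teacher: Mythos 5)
Your proof is correct and follows essentially the same route as the paper: same basepoint $(R_r,\mathrm{id},\ell_0)$, same observation that the orbit stays in the thick part $\mathcal{X}_r(1/r)$, the same application of Theorem~\ref{AB}, and the same final assembly. The only difference is bookkeeping: the paper computes the exact identity $d(x,[\varphi]\cdot x)=\log\|[\varphi]\|_\infty$ and then converts to the $1$-norm via Proposition~\ref{constants}, whereas you establish the two-sided comparison between $e^{d(x_0,\Phi\cdot x_0)}$ and $\|\Phi\|_1$ directly by the based-map/basepoint-correction argument, which is equally valid.
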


\begin{proof}
Fix an automorphism $\varphi \in \aut F_r$.

Consider the point of the Outer space $x\in \mathcal{X}_r$ represented by the triple
$(R_r, id, \ell_0)$ i.e., by the identity marking over the balanced rose (here,
$\ell_0$ assigns constant length $1/r$ to each petal). Now consider the point
$[\varphi] \cdot x=(R_r,\varphi,\ell_0)\in \mathcal{X}_r$. From the definitions,
$\mu:R_r \to R_r$ is a difference of markings if and only if $\mu$ is homotopic to
$\varphi$; and it is straightforward to see that this happens if and only if $\mu =
\varphi\lambda_w \lambda_p$ for some $w\in F_r$ and some path $p$ travelling linearly
from the basepoint $o$ to an internal point of a petal and with $\ell (p)\leqslant
\frac{1}{2r}$ (if $\mu$ fixes the basepoint then $p$ can be taken to be trivial;
otherwise, it can always be taken to be the shortest path from $o$ to $o\mu$).
Moreover, $\mu$ maps each edge $a_i$ linearly to a path of length
$\ell(p)+|a_i\varphi\lambda_w|\frac{1}{r}+\ell(p)$ so, $\sigma(\mu
)=\sigma(\varphi\lambda_w \lambda_p) = \| \varphi\lambda_w \|_{\infty}+2r\ell(p)$. It
follows that
  $$
\begin{array}{rcl} d(x,[\varphi]\cdot x) & = & \min_{w,\, p} \, \{ \log(\sigma(\varphi
\lambda_w \lambda_p)) \} \\ & = & \log (\min_{w,\, p} \, (\| \varphi\lambda_w \|_{\infty}+
2r\ell(p))) \\ & = & \log (\| [\varphi] \|_{\infty}). \end{array}
 $$
Hence, by property (3) above,
 $$
d([\varphi]\cdot x,x) = d(x,[\varphi^{-1}]\cdot x) = \log (\| [\varphi\inv] \|_{\infty}).
 $$
But, since all the involved points belong to the $(1/r)$-thick part
$\mathcal{X}_r(\frac{1}{r})$, we can take the constant $M_r = M(r,\frac{1}{r})$ from
Theorem~\ref{AB} to get $\log (\| [\varphi\inv] \|_{\infty}) \leqslant M_r \log (\|
[\varphi] \|_{\infty})$ and so, $\| [\varphi\inv] \|_{\infty} \leqslant \| [\varphi]
\|_{\infty}^{M_r}$. Bringing in the constant $C_r =C_{\infty,1,r}$ from
Proposition~\ref{constants}, we obtain
 $$
\| [\varphi ]\inv \|_1 \leqslant C_r\| [\varphi ]\inv \|_{\infty} \leqslant C_r\| [\varphi]
\|_{\infty}^{M_r} \leqslant C_r^{M_r+1}\| [\varphi] \|_1^{M_r}.
 $$
Hence $\beta_r(n)\leqslant K_r n^{M_r}$ holds for $K_r =C_r^{M_r+1}$.
\end{proof}

\begin{rmk}
\emph{Theorems~\ref{highrank}(ii) and Corollary~\ref{highup} bound the gap for outer
automorphism inversion in free groups $F_r$ or rank $r\geqslant 3$ between polynomial
with degree $r-1$ and polynomial with degree $M$ for a big enough $M$. This is all the
information known at the moment about Question~\ref{q}. These two bounds are far from
each other and, intuitively, both of them far from sharp. The proof for the lower bound
uses only information coming from the abelianization so, it seems plausible that,
playing with more sophisticated automorphisms of $F_r$ than the $\varphi_p$'s
constructed above, one could improve the degree of the lower bound. On the other hand,
the proof of Algom-Kfir-Bestvina's theorem is indirect and the actual constant provided
there is quite big, indicating that maybe the degree of the upper bound provided for
$\beta_r(n)$ may also be improved.}
\end{rmk}

\begin{rmk}
\emph{We also remark that getting a polynomial upper bound for $\alpha_r (n)$ seems to be
more complicated (see Question~\ref{w}). On the one hand, the geometric techniques coming
from Outer space do not provide control on the length of possible conjugators showing up
when computing the pre-image of the generators $a_i$ by a (even cyclically reduced) given
automorphism of $F_r$. A possibility here could be to try translating the argument above
from the Outer space to the Auter space concerning real automorphisms (not just outer
ones); unfortunately, the theory for the Auter space is much less developed and, for
example, there is no known metric and so no analog to Algom-Kfir-Bestvina's theorem, yet.
On the other hand, and oppositely to the much easier case $r=2$, these conjugators cannot
be avoided in general by just composing with an appropriate inner automorphism because they
can affect differently the various generators.}
\end{rmk}

\subsection*{Acknowledgements}

We thank Mladen Bestvina, Warren Dicks and Ant\'{o}nio Machiavelo for their valuable
suggestions at different stages of this work.

The first named author gratefully acknowledges partial support from  MICINN (Spain), grant
MTM 2009-14464-C02  (European FEDER support included).

The second named author acknowledges support from the European Regional Development Fund
through the programme COMPETE and from the Portuguese Government through FCT -- Funda\c
c\~ao para a Ci\^encia e a Tecnologia, under the project PEst-C/MAT/UI0144/2011.

The third named author gratefully acknowledges partial support from the MEC (Spain) and the
EFRD (EC) through project numbers MTM2008-01550 and MTM2011-25955.

\end{document}